\pgfplotsset{compat=newest}
\newcommand\Umbruch[2][3cm]{\begin{varwidth}{#1}\centering#2\end{varwidth}}
\newcommand{\N}{\mathbb{N}}
\newcommand{\R}{\mathbb{R}}
\DeclareMathOperator*{\argmin}{argmin}
\renewcommand{\d}{{\,\mathrm{d}}}
\renewcommand{\div}{{\mathrm{div}}}
\newcommand{\domain}{{D}}
\newcommand{\Leb}{\mathscr{L}}
\newcommand{\prox}{{\mathrm{prox}}}
\newcommand{\projection}{{\mathrm{proj}}}
\newcommand{\indicatorFct}{{\mathbf{1}}}
\newcommand{\weaklyStar}{\stackrel{*}{\rightharpoonup}}
\theoremstyle{definition}
\newtheorem{Definition}{Definition}[section]
\theoremstyle{remark}
\newtheorem{Remark}[Definition]{Remark}
\theoremstyle{plain}
\newtheorem{Proposition}[Definition]{Proposition}
\newtheorem{Theorem}[Definition]{Theorem}
\def\bfnu{{\nu}} 
\def\dd{\;\!\mathrm{d}}
\newcommand{\Meas}{\mathscr{M}^+(D)}        \newcommand{\SMeas}{\mathscr{M}(D)}           \newcommand{\SMeasd}{\mathscr{M}(D;\R^d)}
\newcommand{\STMeas}{\mathscr{M}([0,1] \times D)}   \newcommand{\STMeasp}{\mathscr{M^+}([0,1] \times D)}    \newcommand{\STMeasd}{\mathscr{M}([0,1] \times D;\R^d)}
\newcommand{\CE}{\mathcal{CE}}
\newcommand{\cA}{\mathcal{D}}
\newcommand{\ABB}{\cA_{\text{trans}}}
\newcommand{\AM}{\cA_{\text{source}}}
\DeclareRobustCommand\onedot{\futurelet\@let@token\@onedot}
\def\@onedot{\ifx\@let@token.\else.\null\fi\xspace}
\def\ie{\emph{i.e}\onedot}
\newcommand{\penaltyPushforward}{\delta}
\renewcommand{\u}{{\theta}} \newcommand{\E}{{\mathcal{E}}} \newcommand{\Ed}{{\mathcal{E}_\delta}}
\newcommand{\Wd}{{\mathcal{W}_{\delta}}}
\newcommand{\BBfunction}{\Phi}
\renewcommand{\div}{{\mathrm{div}}}
\newcommand{\ling}{r} \newcommand{\Ling}{{\mathcal{R}_h}} 
\date{}
\title{Generalized optimal transport with singular sources}
\author{Jan Maas\footnote{IST Austria, 3400 Klosterneuburg (Austria)}
, Martin Rumpf and Stefan Simon\footnote{Institut f\"ur Numerische Simulation, Universit\"at Bonn, 53115 Bonn (Germany)}}
\begin{document}

\maketitle
\begin{abstract} 
We present a generalized optimal transport model in
which the mass-preserving constraint for the $L^2$-Wasserstein distance is relaxed by introducing a source term in the continuity equation.
The source term is also incorporated in the path energy by means of its squared 
$L^2$-norm in time of a functional with linear growth in space.
This extension of the original transport model enables local  density modulation, which is a desirable feature in applications such as image warping and blending.
A key advantage of the use of a functional with linear growth in space is that it allows for singular sources and sinks, which can be supported on points or lines.
On a technical level, the $L^2$-norm in time ensures a disintegration of the source in time, which we use to obtain the well-posedness of the model and the existence of geodesic paths. 
Furthermore, a numerical scheme based on the proximal splitting approach \cite{papadakis2014optimal} is presented.
We compare our model with the corresponding model involving the $L^2(L^2)$-norm of the source, which merges the metamorphosis approach and the optimal transport approaches in imaging \cite{RuSc12}.
Selected numerical test cases show strikingly different behaviour. 

\end{abstract}

\section{An optimal transport model with source term}\label{sec:geodesics}
In the last decade optimal transport became a very popular  tool in image processing and image analysis \cite{PaPe14}, where
the quadratic Wasserstein distance is applied for instance in non-rigid image registration and warping. 
It was also used to robustly measure distances between images or
to segment and classify images  \cite{PeFaRa12}. 
Driven by applications for instance in imaging \cite{rubner2000earth,1109.49027,peyre2012wasserstein,burger2012regularized} there is a strong demand 
to develop robust and efficient algorithms to compute optimal transport geodesics, such as the recently presented entropic regularization \cite{carlier2015convergence,peyre2015entropic} or the 
sparse multiscale approach \cite{schmitzer2015sparse}.
In this paper we propose an extension of the optimal transport model which enlarges the range of applications.

In their groundbreaking paper Benamou and Brenier~\cite{BeBr00} reformulated (for numerical purposes) the problem of optimal transport 
first considered by Monge and then relaxed by Kantorovich 
in a continuum mechanical framework describing the evolution of the mass distribution in time. 
This reformulation turned out to be the geodesic equation on the $L^2$ Wasserstein space.
For an underlying flow of a density $\u$ with Eulerian velocity $v$ one considers the path energy
\begin{align}\label{eq:BB}
  \E[\u, v] = \int_0^1 \int_\domain \u |v|^2 \d x \d t \, ,
\end{align}
where $\domain \subset \R^d$ is assumed to be a bounded convex domain with Lipschitz boundary.
Then the quadratic Wasserstein distance between two probability density function $\u_A$ and $\u_B$ can be computed by minimizing $\E$ over all density functions $\u: [0,1] \times \domain \to \R$ and velocity fields $v: [0,1] \times \domain \to \R^d$ subject to the continuity equation $\partial_t \u + \div (\u v) = 0$ and the constraints $\u(0) = \u_A$ and $\u(1) = \u_B$.
Here the continuity equation enforces $\u(t)$ to remain in the space of probability densities.
In applications such as image registration or image morphing, input images are frequently not of the same mass. 
Thus, a contrast modulation on the input images is required before an optimal match between the input images can be computed.
But, even if the total mass of the input images coincides, the incorporation of local intensity modulation is desirable to cope with the variability of natural images and to avoid 
''artificial'' long range transport just for the purpose of mass redistribution between totally independent image structures.

To this end, following \cite{RuSc12}, we relax the mass preserving condition and introduce a source term $z:[0,1] \times \domain \to \R$ in the continuity equation:
\begin{align}\label{eq:source}
  \partial_t \u + \div (\u v) = z \,.
\end{align}
This source terms in then also incoporated in the path energy via a suitable penalty term, which represented the cost 
of mass production. It turns out that a suitable choice of the penalty is the squared $L^2$-norm in time of a functional of $z$ with linear growth in space.
In explicit, we ask for minimizers of the generalized action functional
\begin{align}\label{eq:EnergyWithSourceL2L1}
 \Ed[\u, v, z ] = \int_0^1 \int_\domain \u |v|^2 \d x \d t + \frac{1}{\penaltyPushforward} \int_0^1 \left( \int_\domain \ling(z) \d x \right)^2 \d t \, .
\end{align}
subject to the relaxed continuity equation \eqref{eq:source} and the constraints $\u(0) = \u_A$ and $\u(1) = \u_B$.
Here, $\ling : \R \to \R$ is a non-negative, convex function with linear growth, satisfying $\ling(0) = 0$. 
In this paper we assume that $\ling(z)$ has the same growth as $|z|$.
Cases of interest in our considerations are $r(s) = |s|$ corresponding to the $L^1$ norm in space or a Huber norm in space with
$r(s) = \tfrac1{2\beta} s^2$ for $|s| \leq \beta$ and $|s|-\tfrac\beta2$ else for some $\beta >0$.
Here, $\penaltyPushforward > 0$ denotes a penalty parameter allowing to grade the mass modulation rate.
It is desirable to allow also for singular sources which are supported on line segments of points in space. 
This seems to be impossible with a penalty involving the squared $L^2$ norm of the source term in space and time, as has been suggested previously in \cite{RuSc12}.
The linear growth property will ensure that singular source terms are admissible.
Our model is related to the Hellinger-Kantorovich metric considered in \cite{LiMi15,CPSV15,KMV15}, in which a reaction leads to generation or absorption of mass which is incorporated in the continuity equation.

To establish existence of geodesic paths we work in  the framework of Radon measures and consider a suitable decomposition of the corresponding measures for mass, momentum and source term into absolutely continuous and orthogonal parts with respect to the Lebesgue measure.
Since these decompositions are not unique, it is useful to require $1$-homogeneity of the integrands for the singular measures, which ensures that the definition of the energy functionals does not depend on the decomposition.
Indeed, we will observe that the $L^1$-norm of the source  in space allows singular sources.
Furthermore, the $L^2$-norm in time provides an equi-integrability estimate, which guarantees compactness in the space of curves of Radon measures.

The flow formulation \eqref{eq:BB} has been used in \cite{BeBr00} primarily to compute optimal transport geodesics numerically with an augmented Lagrangian approach.
In \cite{papadakis2014optimal} it was shown that a proximal splitting algorithm leads to an equivalent optimization method.

This paper is organized as follows:
First, in Section \ref{sec:existence} we rigorously define the generalized optimal transport model and establish the existence of optimal transport plans.
Then we propose in Section \ref{sec:numeric} an efficient numerical scheme via proper adaptation of the proximal splitting method. 
Finally,  in Section \ref{sec:numericalResults} we present results and discuss properties of the generalized model.

\section{Existence of geodesics for generalized transport metrics}\label{sec:existence}
We follow the lines of \cite{DNS09}  and propose a measure-valued setup for the energy in \eqref{eq:EnergyWithSourceL2L1} as well as for the continuity equation with source term \eqref{eq:source}.
The corresponding minimization problem will allow us to define a generalized Wasserstein distance on the space of positive Radon measures.
We will focus here on the treatment of the source term based on the squared 
$L^2$-norm in time of a functional with linear growth in space, adapting some arguments from the  $L^2(L^2)$-case presented in \cite{RuSc12}. For the convenience of the reader, we will keep the exposition self contained.

First, we apply the change of variables $(\u,v) \mapsto (\u, m = \u v)$ already used by Benamou and Brenier \cite{BeBr00}.
Instead of the pair $(\u,v)$ we consider the pair $(\u,m)$, where $m$ denotes the momentum, 
such that the integrand $ |v|^2 \u$ pointwise  transforms into
\begin{align*}
 \BBfunction(\u, m ) 
 = \left\{
    \begin{array}{cl} 
       \frac{|m|^2}{\u} & \mbox{if } \u > 0 \, , \\
       0 & \mbox{if } (\u,m) = 0 \, ,\\
       + \infty & \mbox{otherwise} \, .
    \end{array}
   \right.
\end{align*}
with the advantage that $\BBfunction$ is lower-semicontinuous, convex and $1$-homogeneous.

We shall formulate the generalized continuity equation  in terms of measure-valued quantities, namely mass $\mu \in \STMeasp$, momentum $\nu \in \STMeasd$, and source term $\zeta \in \STMeas$.
We will consider curves of measures on $\domain$ instead of just measures on the product space $[0,1] \times \domain$ as the proper
measure theoretic setup for the continuity equation equation with source term. We recall that $D$ is assumed to be a bounded convex domain with Lipschitz boundary.

\begin{Definition}[A weak continuity equation with source term]\label{def:cont-eq}
Let $\mu_A, \mu_B \in \Meas$ be given.
A triple of measures $(\mu, \nu, \zeta)$ in the space $\STMeasp \, \times\,  \STMeasd \, \times \, \STMeas$ is said to be a weak solution to  
 the continuity equation with source term
\begin{align*}
 \partial_t \mu + \div(\nu) = \zeta\, ,
  \qquad \mu_0 = \mu_A\,, \quad \mu_1 =  \mu_B\, ,  
\end{align*} 
if the following conditions hold:
\begin{enumerate}
	\item[\emph{(i)}] 
the measures $\mu, \nu$ and $\sigma$ admit disintegrations with respect to the Lebesgue measure in time: i.e., there exist measure-valued functions
$t \mapsto \mu_t$ weak*-continuous in  $\Meas$\,, 
$t \mapsto \nu_t$ Borel measurable in $\SMeasd $ with
$\int_0^1 |\nu_t|(\domain) \d t < \infty$, and
$t \mapsto \zeta_t$  Borel measurable in $\SMeas$ with $\int_0^1 |\zeta_t|(\domain) \d t < \infty$,
such that
    \begin{align*}
      & \int_{[0,1] \times \domain} \eta(t,x) \d \mu(t,x) = \int_0^1 \int_\domain \eta(t,x) \d \mu_t(x) \d t \quad \forall \eta \in L^1(\mu)\,, \\
      & \int_{[0,1] \times \domain} \eta(t,x) \d \nu(t,x) = \int_0^1 \int_\domain \eta(t,x) \d \nu_t(x) \d t \quad \forall \eta \in L^1(\nu)\,, \\
      & \int_{[0,1] \times \domain} \eta(t,x) \d \zeta(t,x) = \int_0^1 \int_\domain \eta(t,x) \d \zeta_t(x) \d t \quad \forall \eta \in L^1(\zeta)\,.
    \end{align*}
\item[\emph{(ii)}]
 the continuity equation with source term $\partial_t \mu + \div(\nu) = \zeta$ with boundary values $\mu_0 = \mu_A$ and $\mu_1 = \mu_B$ holds in the sense of distributions, 
i.e., for all space-time test functions  $\eta \in C^1_0( [0,1] \times \bar \domain)$,
    \begin{eqnarray}\label{eq:CE-dist}
      0 &=& \int_0^1 \bigg[\int_\domain \partial_t \eta(t,x) \d \mu_t(x) +  \int_\domain \nabla \eta(t,x) \cdot \d \nu_t(x) +  \int_\domain \eta(t,x) \d \zeta_t(x) \bigg] \d t   \\
     &&  -\int_\domain \eta(1,x) \d \mu_B(x) +   \int_\domain  \eta(0,x) \d \mu_A(x)\, .
    \end{eqnarray}
    \end{enumerate}
\end{Definition}
    We denote the set of all solutions of the weak continuity equation with source term by $\CE[0,1]$.

A standard approximation argument (see \cite[Lemma 4.1]{DNS09}) shows that solutions to the continuity equation with source term satisfy, for all $0 \leq t_0 \leq t_1 \leq 1$,
\begin{align}\label{eq:CE-2}
\begin{aligned}
  &   \int_\domain \eta(t_1,x) d \mu_{t_1}(x) -  \int_\domain \eta(t_0,x) d \mu_{t_0}(x) \\
  & = \int_{t_0}^{t_1} \int_\domain \partial_t \eta(t,x) \d \mu_t(x) \d t  + \int_{t_0}^{t_1} \int_\domain \nabla \eta(t,x) \cdot \d \nu_t(x) \d t + \int_{t_0}^{t_1}  \int_\domain \eta(t,x) \d \zeta_t(x) \d t
\end{aligned}
\end{align}
for all space-time test functions  $\eta \in C^1( [0,1] \times \overline{\domain})$.
In particular, taking $\eta(t,x) \equiv 1$, it follows that
\begin{align}\label{eq:mass-increase}
 \mu_{t_1}(\domain) - \mu_{t_0}(\domain) 
  = \int_{t_0}^{t_1} \zeta_t(\domain) \d t\;.
\end{align}

\par\bigskip
Next, we define the energy \eqref{eq:EnergyWithSourceL2L1} in terms of measures. To this end
we decompose for each $t\in [0,1]$ the triple $(\mu_t, \nu_t, \zeta_t) \in \Meas \times \SMeasd \times \SMeas$ using the Lebesgue decomposition theorem into
\begin{align*}
 \mu_t = \u_t \Leb + \mu_t^\perp\;, \qquad
 \nu_t = m_t \Leb + \nu_t^\perp\;, \qquad
 \zeta_t = z_t \Leb + \zeta_t^\perp\; ,
\end{align*}
such that the singular parts $\mu_t^\perp \in \Meas$, $\nu_t^\perp \in \SMeasd$, and $\zeta_t^\perp \in \SMeas$ are singular with respect to the Lebesgue measure $\Leb$ on $\domain$.
Then we define $\Leb_t^\perp := \mu_t^\perp + |\nu_t^\perp| + |\zeta_t^\perp| \in \Meas$, so that $\Leb_t^\perp$ is orthogonal to $\Leb$.
By construction, the singular parts admit a density with respect to $\Leb_t^\perp$:
\begin{align*}
 \mu_t^\perp = \u_t^\perp \Leb_t^\perp\;, \qquad
 \nu_t^\perp = m_t^\perp \Leb_t^\perp \;, \qquad
 \zeta_t^\perp = z_t^\perp \Leb_t^\perp\;.
\end{align*}
With this decomposition at hand, we can define the rigorous version of the energy functional \eqref{eq:EnergyWithSourceL2L1} in the measure-valued setting.
The path energy functional for transport will be taken from the Benamou-Brenier formulation of the $L^2$-Wasserstein distance:
\begin{align*}
 \ABB[\mu_t, \nu_t] :=  \int_\domain \BBfunction(\u_t, m_t) \d \Leb + \int_\domain \BBfunction(\u_t^\perp, m_t^\perp) \d \Leb_t^\perp \, .
\end{align*}
To describe the path energy functional involving  the source term, recall that $\ling : \R \to \R$ is a non-negative, convex function with linear growth (i.e., $\sup_{s \to \infty} \ling(s)/s \in (0,\infty)$ and $\sup_{r \to \infty} \ling(-s)/s \in (0,\infty)$ ), satisfying $\ling(0) = 0$.
We consider the path energy functional for the source term given by  
\begin{align*}
 & \AM[\zeta_t] :=  \left( \int_\domain r(z_t) \d \Leb + \int_\domain |z_t^\perp| \d \Leb_t^\perp \right)^2 \, .
\end{align*}
Note that we consider a $1$-homogeneous integrand for the singular part of the source measure 
with the aim to allow for singular sources with support of the source measure on a set of zero Lebesgue measure.  
The total energy functional $\cA_\penaltyPushforward : \Meas \times \SMeasd\times \SMeas \to [0,+\infty]$ is defined as
\begin{align*}
 \cA_\penaltyPushforward[\mu_t, \nu_t, \zeta_t] := \ABB[\mu_t, \nu_t] + \frac{1}{\penaltyPushforward} \AM[\zeta_t] \; .
\end{align*}

Here we use the proper extension of the functionals $J(u) = \int_D f(u) \d \Leb$ for $u \in L^1(D)$  to the space of Radon measures due to Bouchitt{\'e} and Buttazzo 
\cite{bouchitte1990new, bouchitte1992integral}, 
which is given by 
$$J(u) = \int_D f( \tfrac{\d u}{\d \Leb } ) \d \Leb + \int_D f^{\infty} ( \tfrac{\d u^s}{\d |u^s|} ) \d |u^s| = \int_D f( \tfrac{\d u}{\d \Leb } ) \d \Leb + f^{\infty}(1) |u^s|(D)$$  for $u \in \SMeas$, where $u = \tfrac{\d u}{\d \Leb } + u^s$ is the Lebesgue decomposition of $u$ and $f^{\infty}$ is the recession function of $f$ defined by $f^{\infty}(y) = \sup_{s \to + \infty} f(sy)/s$. In fact, the functional $J$ is then weak*-lower-semicontinuous in $\SMeas$. 

Note that in our case the function $\BBfunction$ is $1$-homogeneous and the recession function of $r$ is up to a constant given by $z \mapsto |z|$, since the function $r$ is assumed to be of linear growth.
Thus, we can rigorously define the total energy functional $\Ed$ (cf. \ref{eq:EnergyWithSourceL2L1}) for measures $(\mu, \nu, \zeta) \in \STMeasp \times \STMeasd \times \STMeas$ 
as
\begin{align} \label{eq:energy}
 \Ed(\mu,\nu,\zeta) 
 = \left\{
    \begin{array}{ll} 
       \int_0^1 \cA_\penaltyPushforward[\mu_t, \nu_t, \zeta_t] \d t & \mbox{if } (\mu, \nu, \zeta) \in \CE[0,1] \, , \\
       + \infty & \mbox{otherwise} \, .
    \end{array}
   \right.
\end{align}

\par\bigskip

As an immediate consequence of general lower-semicontinuity results for integral functionals on measures (\cite{AFP00}, see also \cite[Section 3]{DNS09}) we deduce the lower semi-continuity of $\cA_\penaltyPushforward$.
More precisely, for weak$^*$-convergent sequences of measures 
 \begin{align*}
  \mu_t^n \weaklyStar \mu_t \in \Meas \, , \qquad
  \nu_t^n \weaklyStar \nu_t \in \SMeasd \, , \qquad
  \zeta_t^n \weaklyStar \zeta_t \in \SMeas \, .
 \end{align*}
we have that 
 \begin{align}\label{eq:lsc1}
  \cA_\penaltyPushforward[\mu_t, \nu_t, \zeta_t]  \leq \liminf_{n \to \infty} \cA_\penaltyPushforward[\mu_t^n, \nu_t^n, \zeta_t^n] \, .
 \end{align}
Next, we state a compactness result for solutions to the weak continuity equation with source term.
\begin{Proposition}[Compactness for solutions to the continuity equation with source term with bounded action]\label{prop:compact}
  Suppose that a sequence $(\mu^n, \nu^n, \zeta^n)_{n \in \N}$ in $\CE[0,1]$ with boundary values $\mu_A$ and $\mu_B$ has bounded energy, i.e.,
  \begin{align}\label{eq:BoundedEnergyAssumption}
    \sup_n \Ed[\mu^n, \nu^n, \zeta^n] \d t  \leq C \, .
  \end{align}
Then, there exists a subsequence (again indexed by n) and a triple $(\mu, \nu, \zeta) \in \CE[0,1]$ such that 
$\mu_t^n \weaklyStar \mu_t$ in $\Meas$ for all $t \in [0,1]$\,, \ 
$\nu^n \weaklyStar \nu$ in $\STMeasd$\,, \ $\zeta^n \weaklyStar \zeta$ in $\STMeas$, and
\begin{align}\label{eq:lsc}
  \int_0^1 \cA_\penaltyPushforward[\mu_t, \nu_t, \zeta_t] \d t \leq \liminf_{n \to \infty} \int_0^1 \cA_\penaltyPushforward[\mu_t^n, \nu_t^n, \zeta_t^n] \d t \, .
\end{align}
\end{Proposition}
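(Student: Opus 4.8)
The plan is to run the standard compactness-plus-lower-semicontinuity argument for generalized continuity equations, in the spirit of~\cite{DNS09}; the features specific to the present setting are the linear growth of $r$, which is exactly what produces a total-variation bound for the (possibly singular) sources $\zeta^n_t$, and the squared $L^2$-in-time structure of $\AM$, which upgrades that bound to equi-integrability in time. \textbf{A priori bounds.} From \eqref{eq:BoundedEnergyAssumption} we get $\int_0^1\ABB[\mu^n_t,\nu^n_t]\,\d t\le C$ and $\int_0^1\AM[\zeta^n_t]\,\d t\le\penaltyPushforward C$; since $\AM[\zeta^n_t]$ is the square of $g^n(t):=\int_\domain r(z^n_t)\,\d\Leb+\int_\domain|z^{n,\perp}_t|\,\d\Leb^\perp_t$, the sequence $(g^n)$ is bounded in $L^2(0,1)$. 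Convexity of $r$ together with $r(0)=0$ and linear growth gives $r(s)\ge c|s|-c'$ for some $c>0$ and $c'\ge0$, so $|\zeta^n_t|(\domain)\le c^{-1}g^n(t)+c^{-1}c'|\domain|$ is bounded in $L^2(0,1)$, hence equi-integrable, with $\int_0^1|\zeta^n_t|(\domain)\,\d t$ uniformly bounded. By \eqref{eq:mass-increase}, $\mu^n_t(\domain)=\mu_A(\domain)+\int_0^t\zeta^n_s(\domain)\,\d s$ is bounded by some $C_2$ uniformly in $t$ and $n$, and the Cauchy--Schwarz inequality applied to the Benamou--Brenier integrand gives $|\nu^n_t|(\domain)\le\mu^n_t(\domain)^{1/2}\,\ABB[\mu^n_t,\nu^n_t]^{1/2}\le C_2^{1/2}\,\ABB[\mu^n_t,\nu^n_t]^{1/2}$, so $t\mapsto|\nu^n_t|(\domain)$ is also bounded in $L^2(0,1)$, hence equi-integrable, with $\int_0^1|\nu^n_t|(\domain)\,\d t$ bounded. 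Therefore $\mu^n,\nu^n,\zeta^n$ are bounded in $\STMeasp,\STMeasd,\STMeas$, and along a subsequence $\mu^n\weaklyStar\mu$ (with $\mu\ge0$), $\nu^n\weaklyStar\nu$, $\zeta^n\weaklyStar\zeta$.

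\textbf{Pointwise-in-time compactness and membership in $\CE[0,1]$.} Testing \eqref{eq:CE-2} with a purely spatial $\eta\in C^1(\overline\domain)$ yields, for $t_0\le t_1$,
\[ \bigl|\textstyle\int_\domain\eta\,\d\mu^n_{t_1}-\int_\domain\eta\,\d\mu^n_{t_0}\bigr|\le\|\nabla\eta\|_\infty\int_{t_0}^{t_1}|\nu^n_t|(\domain)\,\d t+\|\eta\|_\infty\int_{t_0}^{t_1}|\zeta^n_t|(\domain)\,\d t, \]
whose right-hand side vanishes as $|t_1-t_0|\to0$ uniformly in $n$ by the equi-integrability above. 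Together with $\mu^n_t(\domain)\le C_2$, this makes $t\mapsto\int_\domain\eta\,\d\mu^n_t$ equibounded and equicontinuous for each $\eta$ in a countable subset of $C^1(\overline\domain)$ dense in $C(\overline\domain)$; Arzel\`a--Ascoli and a diagonal argument produce a further subsequence with $\mu^n_t\weaklyStar\mu_t$ in $\Meas$ for every $t\in[0,1]$, with $t\mapsto\mu_t$ weak*-continuous, and a dominated-convergence argument identifies $t\mapsto\mu_t$ with the time-disintegration of $\mu$. For $\nu$ and $\zeta$ one extracts further subsequences with $|\nu^n|\weaklyStar\sigma$, $|\zeta^n|\weaklyStar\tau$ in $\STMeasp$; the time-marginals $\pi_{\#}|\nu^n|$ have Lebesgue densities $|\nu^n_t|(\domain)$ bounded in $L^2(0,1)$, so $\pi_{\#}\sigma\ll\Leb$ and hence $\pi_{\#}|\nu|\le\pi_{\#}\sigma\ll\Leb$, which gives the time-disintegration $\nu=\nu_t\,\d t$ with $t\mapsto\nu_t$ Borel and $\int_0^1|\nu_t|(\domain)\,\d t<\infty$, and similarly for $\zeta$. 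This establishes condition~(i) of Definition~\ref{def:cont-eq}. Using the disintegrations, the distributional identity \eqref{eq:CE-dist} for each $(\mu^n,\nu^n,\zeta^n)$ can be rewritten by pairing $\partial_t\eta,\nabla\eta,\eta\in C([0,1]\times\overline\domain)$ against the space-time measures $\mu^n,\nu^n,\zeta^n$; passing to the weak* limit gives \eqref{eq:CE-dist} for $(\mu,\nu,\zeta)$, while $\mu_0=\mu_A$ and $\mu_1=\mu_B$ follow from the pointwise-in-time convergence. Hence $(\mu,\nu,\zeta)\in\CE[0,1]$.

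\textbf{Lower semicontinuity.} It remains to prove \eqref{eq:lsc}. I would exhibit the time-integrated action as a supremum of weak*-lower-semicontinuous functionals of the space-time triple. For the transport term, since $\BBfunction$ is convex, lower-semicontinuous and $1$-homogeneous, the theory of~\cite{AFP00, bouchitte1990new} yields
\[ \int_0^1\ABB[\mu_t,\nu_t]\,\d t=\sup\bigl\{\textstyle\int_{[0,1]\times\domain}(a\,\d\mu+b\cdot\d\nu)\ :\ (a,b)\in C([0,1]\times\overline\domain)^{1+d},\ a+\tfrac14|b|^2\le0\bigr\}, \]
manifestly a supremum of weak*-continuous functionals. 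For the source term, writing $\mathcal J(\zeta_t):=\int_\domain r(z_t)\,\d\Leb+\int_\domain|z^\perp_t|\,\d\Leb^\perp_t=\sup\{\int_\domain\phi\,\d\zeta_t:\phi\in C(\overline\domain),\ r^*(\phi)\le0\}$ (again by~\cite{bouchitte1990new}, the recession function of $r$ being a positive multiple of $|\cdot|$), and using $\mathcal J(\zeta_t)\ge0$ so that $\mathcal J(\zeta_t)^2=\sup_{c\ge0}(2c\,\mathcal J(\zeta_t)-c^2)$, a measurable-selection argument disentangles the squaring from the time integral and gives
\[ \int_0^1\AM[\zeta_t]\,\d t=\sup\bigl\{\textstyle\int_{[0,1]\times\domain}2c(t)\phi(t,x)\,\d\zeta-\int_0^1c(t)^2\,\d t\bigr\}, \]
the supremum being over $c\in L^2_+(0,1)$ and $\phi\in C([0,1]\times\overline\domain)$ with $r^*(\phi(t,\cdot))\le0$ for all $t$; each bracketed functional is weak*-continuous in $\zeta$ (it is continuous when $c$ is continuous, and a general $c$ is approximated in $L^2$ using the uniform mass and variation bounds). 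Consequently both $\int_0^1\ABB[\,\cdot\,]\,\d t$ and $\int_0^1\AM[\,\cdot\,]\,\d t$ are weak*-lower-semicontinuous on the relevant product of measure spaces, and evaluating along the subsequence constructed above gives \eqref{eq:lsc}.

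The step I expect to be genuinely delicate is the last one: we have pointwise-in-time compactness only for the masses $\mu^n_t$, not for $\nu^n_t$ or $\zeta^n_t$, so the naive route (Fatou's lemma plus the slicewise estimate \eqref{eq:lsc1}) is unavailable. The dual representation above circumvents this, but it hinges on the $1$-homogeneity of $\BBfunction$ and of the recession part of $r$ --- which is what makes the slice energies independent of the chosen disintegrations and legitimises the duality --- and on the elementary identity $a^2=\sup_{c\ge0}(2ac-c^2)$, which converts the squared-$L^2$-in-time penalty into a supremum of functionals affine in $\zeta$; the measurable-selection step needed to interchange this supremum with $\int_0^1\d t$ and the verification that continuous space-time test functions suffice in these suprema are the remaining technical points.
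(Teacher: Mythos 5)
Your a priori estimates (linear growth of $\ling$ giving an $L^2(0,1)$ bound on $t\mapsto|\zeta^n_t|(\domain)$, the mass bound via \eqref{eq:mass-increase}, the Cauchy--Schwarz bound on $|\nu^n_t|(\domain)$), the equi-integrability argument yielding disintegrations of the limit measures, and the passage to the limit in the continuity equation all follow the same lines as the paper's proof; the only cosmetic difference is that you obtain the pointwise-in-time convergence $\mu^n_t\weaklyStar\mu_t$ by an Arzel\`a--Ascoli/equicontinuity argument, whereas the paper reads it off by passing to the limit in \eqref{eq:CE-2} with test functions $\nabla\eta(x)\chi_{[0,\tau]}(t)$ and $\eta(x)\chi_{[0,\tau]}(t)$. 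Where you genuinely depart from the paper is the lower-semicontinuity step: the paper simply asserts that \eqref{eq:lsc} follows from the slicewise estimate \eqref{eq:lsc1}, and you correctly point out that slicewise weak* convergence is only available for $\mu^n_t$, so you replace this by a duality representation of the time-integrated action (supremum of weak*-continuous functionals, with $a^2=\sup_{c\ge0}(2ac-c^2)$ to handle the square in time). This is a legitimate, more explicit route, essentially in the spirit of the space-time lower-semicontinuity results of Dolbeault--Nazaret--Savar\'e and Ambrosio--Fusco--Pallara that the paper implicitly invokes.

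There is, however, one concrete flaw in that step. Your dual formula $\int_\domain \ling(z_t)\d\Leb+\int_\domain|z^\perp_t|\d\Leb^\perp_t=\sup\{\int_\domain\phi\,\d\zeta_t:\ \phi\in C(\overline\domain),\ \ling^*(\phi)\le0\}$ is valid only when $\ling$ is positively $1$-homogeneous, e.g.\ $\ling=|\cdot|$. For the Huber function, which is one of the two cases the paper explicitly covers (and the one used numerically), $\ling^*(\phi)=\tfrac\beta2\phi^2$ for $|\phi|\le1$ and $+\infty$ otherwise, so the constraint $\ling^*(\phi)\le0$ forces $\phi\equiv0$ and your supremum is $0$. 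Since the lsc argument via suprema of weak*-continuous functionals requires the representation to be an \emph{equality} at the limit point (an inequality $\sup_i F_i\le F$ only proves lsc of the smaller functional), the proof as written does not give \eqref{eq:lsc} in the Huber case. The repair is routine: use the full duality $\sup_\phi\{\int_\domain\phi\,\d\zeta_t-\int_\domain\ling^*(\phi)\,\d\Leb\}$ over continuous $\phi$ with $|\phi|\le\ling^\infty(1)$ (the recession constraint handles the singular part); the squaring trick and the passage to space-time pairs $(c,\phi)$ then go through, the extra term $-2\int_0^1 c(t)\int_\domain\ling^*(\phi(t,\cdot))\,\d\Leb\,\d t$ being independent of $\zeta$. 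Alternatively you can avoid the delicate interchange of the supremum with the time integral altogether: $g^n(t):=\sqrt{\AM[\zeta^n_t]}$ is bounded in $L^2(0,1)$, hence $g^n\weakly g$ weakly in $L^2$ along a subsequence; restricting to $[t_0,t_1]\times\domain$ and using the Bouchitt\'e--Buttazzo weak* lsc of the space-time source functional gives $\int_{t_0}^{t_1}\sqrt{\AM[\zeta_t]}\,\d t\le\int_{t_0}^{t_1}g\,\d t$ for all $t_0<t_1$, so $\sqrt{\AM[\zeta_t]}\le g(t)$ a.e., and the source part of \eqref{eq:lsc} follows from weak lower semicontinuity of the $L^2$-norm, while the transport part is exactly the space-time Benamou--Brenier duality you already quoted.
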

\begin{proof}
Since $\ling$ is of linear growth, we have $|z| \leq C (1 + \ling(z))$, hence
\begin{align*}
	|\zeta_t^n(D)|
	 = \int_D |z_t^n| \d \Leb  + \int_D |(z_t^n)^\perp| \d \Leb^\perp
	 \leq C \Big(1 + \sqrt{\AM[\zeta_t^n]}\Big)
	 \leq C \;.
\end{align*}
  Here and in what follows $C$ is a generic constant, which may change from line to line.
   Because of the bounded energy assumption \eqref{eq:BoundedEnergyAssumption}, we obtain a uniform bound for the source term:
  \begin{align}\label{eq:M3}
    \sup_n |\zeta^n([0,1] \times \domain)| 
    = \sup_n \int_0^1 |\zeta_t^n(D)| \d t 
     \leq C \, .
  \end{align}
 For this estimate we deduce that a subsequence of $\zeta^n$ converges weakly-$*$ to a measure $\zeta$.
  Crucial for the compactness result is that we can disintegrate $\zeta$ with respect to the Lebesgue measure on $[0,1]$ into a family of measures $\{\zeta_t\}_{t \in [0,1]} \in \SMeas$.
  Now, by \eqref{eq:M3} the sequence $\{t \mapsto |\zeta_t^n|(\domain)\}_n$ is uniformly bounded in $L^2([0,1])$.
  This implies an equi-integrability estimate for $\{t \mapsto \zeta_t^n(\domain)\}_n$, and as a consequence one obtains the requested 
  disintegration $\{\zeta_t\}_{t \in [0,1]} \in \SMeas$ of the limit measure $\zeta$.

  Next, formula \eqref{eq:mass-increase} for the change of mass yields a uniform bound
  \begin{align}\label{eq:mass-bound}
  \mu_t^n(\domain) \leq \mu_A(\domain) + \int_{0}^{t} |\zeta_s^n|(\domain) \d s \leq C
  \end{align}
  for all $n \in \N$ and $t \in [0,1]$.
 
Then, we can proceed exactly as for the source term in $L^2(L^2)$ (cf. \cite{RuSc12}). We include these arguments for the sake of completeness.
First, we claim that the maps $\{t \mapsto |\bfnu_t^n|(\domain)\}_n$ are uniformly bounded in $L^2([0,1])$, hence uniformly integrable.
To see this, we follow \cite[Proposition 3.6]{DNS09} to obtain
\begin{align*}
 &|\bfnu_t^n|(D)
  =  \int_D |m_t^n| \dd \Leb  
        +  \int_D |(m_t^n)^\perp| \dd \Leb^\perp 
 \\& \leq \bigg( \int_D \phi(\u_t^n, m_t^n) \dd \Leb  \bigg)^{\frac12}
 \bigg( \int_D \u_t^n \dd \Leb \bigg)^{\frac12}
 + 
 \bigg( \int_D \phi\big((\u_t^n)^\perp, (m_t^n)^\perp\big)  \dd \Leb^\perp   \bigg)^{\frac12}
 \bigg( \int_D  (\u_t^n)^\perp \dd \Leb^\perp  \bigg)^{\frac12}
\\& \leq \bigg( \int_D \phi(\u_t^n, m_t^n) \dd \Leb   
    +  \int_D \phi\big((\u_t^n)^\perp, (m_t^n)^\perp\big) \dd \Leb^\perp  \bigg)^{\frac12}
 \bigg( \int_D \u_t^n \dd \Leb   + 
  \int_D (\u_t^n)^\perp \dd \Leb^\perp  \bigg)^{\frac12}
\\& = \Big(\ABB(\mu_t^n, \bfnu_t^n)\Big)^{\frac12} \big(\mu_t^n(D)\big)^{\frac12}\,,
\end{align*}
where we used the scalar inequality $\sqrt{ab} + \sqrt{cd} \leq \sqrt{a+c}\sqrt{b+d}$ which holds for $a,b,c,d \geq 0$.
In view of \eqref{eq:BoundedEnergyAssumption} and \eqref{eq:mass-bound}, the claim follows.
Using the inequality $|\bfnu^n|([0,1] \times \domain) \leq (\int_0^1 |\bfnu_t^n|(\domain)^2 \dd t)^{\frac12}$, we infer that the measures $\{\bfnu^n\}_n \in \STMeasd$ have uniformly bounded total variation on $[0,1] \times \domain$. Therefore, we can extract a subsequence that converges weakly$^*$ to some measure $\bfnu \in \STMeasd$. 
Since the mapping $\{t \mapsto |\bfnu_t^n|(\domain)\}_n$ is uniformly integrable,  the image measure of $\bfnu$ under the mapping $(t,x) \mapsto t$ is absolutely continuous with respect to the Lebesgue measure on $[0,1]$. Therefore, we obtain a disintegration $\{\bfnu_t\}_{t \in [0,1]} \in \SMeasd$ of $\nu$.

 Fix $0 \leq \tau \leq 1$, take $\eta \in C^1(\domain)$, and set $ \xi(t,x) := \nabla\eta(x)\chi_{[0,\tau]}(t)$. 
  Even though $\bar\xi$ is discontinuous, it follows from general approximation results (see \cite[Proposition 5.1.10]{ambrosio2006gradient}) that 
  \begin{align}\label{eq:nu-conv}
  \int_{0}^{\tau} \int_\domain \nabla \eta \dd \bfnu_t^n \dd t
    =   \int_{[0,1] \times \domain}  \xi \d \bfnu^n
    \to \int_{[0,1] \times \domain}  \xi \d \bfnu
    = \int_{0}^{\tau} \int_\domain \nabla \eta \d \bfnu_t \dd t\;.
  \end{align}
Setting $\iota(t,x) := \eta(x) \chi_{[0,\tau]}(t)$ and arguing as above, we obtain
  \begin{align}\label{eq:zeta-conv}
    \int_{0}^{\tau} \int_\domain \eta \dd \zeta_t^n \dd t
    =   \int_{[0,1] \times \domain} \iota\dd \zeta^n
    \to \int_{[0,1] \times \domain} \iota \dd \zeta
    = \int_{0}^{\tau} \int_\domain \eta \dd \zeta_t \dd t\;.
  \end{align}
Now we can obtain convergence of a subsequence of $\{\mu_t^n\}_n$. Indeed,  in view of \eqref{eq:nu-conv} and \eqref{eq:zeta-conv} 
we can pass to the limit in  the weak continuity equation  \eqref{eq:CE-2} for $(\mu^n, \nu^n, \zeta^n)$ and obtain 
weak$^*$-convergence of $\{\mu_\tau^n\}_n$ for a subsequence to some measure $\mu_\tau$ for every $\tau \in [0,1]$.
Finally, we define $\mu \in \STMeasp$ by 
  \begin{align}
   \int_{[0,1] \times \domain} \eta(t,x) \d \mu(t,x) = \int_0^1 \int_\domain \eta(t,x) \d \mu_t(x) \d t \quad \forall \eta \in C_c^0(\domain) \, .
  \end{align}
  It is straightforward to check that $\mu^n$ converges weakly$^*$ to $\mu$ in $\STMeasp$.
 The lower semicontinuity estimate \eqref{eq:lsc} follows from \eqref{eq:lsc1}.
\end{proof}
\begin{Remark}\label{remark:L1L1}
 At first glance the penalty functional  $\int_0^1 \left( \int_\domain |z| d \Leb + \int_\domain |z^\perp| \d \Leb^\perp  \right) \d t$ seems to be an appropriate choice, which 
 allows for singular sources due to the built-in $1$-homogeneity of the integrand.
 However, there is no equi-integrability estimate for a sequence of source terms $\{t \mapsto \zeta_t^n(\domain)\}_n$. In fact, a uniform bound in $L^1$ does not suffice to deduce uniform integrability.
 Thus, the disintegration of the limit measure $\zeta$ remains unclear.
 In other words, there exists a subsequence of an energy minimizing sequence that converges weakly$^*$ to a measure on $[0,1] \times D$, but the limit measure can not necessarily be represented in terms of a curve in $\SMeas$.
\end{Remark}

Now, we can rigorously define a generalized Wasserstein distance.
For $\mu_A, \mu_B \in \Meas$ we define $\Wd[\mu_A,\mu_B] \in [0,+\infty]$ by
 \begin{align}\label{eq:def-Wd}
  \Wd[\mu_A, \mu_B] := \inf_{(\mu,\nu,\zeta) \in \CE[0,1], \mu_0 = \mu_A, \mu_1 = \mu_B} \bigg( \Ed[\mu,\nu,\zeta] \bigg)^{1/2}\, .
 \end{align}
The following result shows in particular that $\Wd[\mu_A,\mu_B] \in [0,\infty)$ for all $\mu_A, \mu_B \in \Meas$.

\begin{Theorem}[Existence of geodesics]\label{thm:geod}
 Let $\penaltyPushforward \in (0,\infty)$ and take $\mu_A, \mu_B \in \Meas$.
 Then there exists a minimizer $(\overline\mu_t, \overline\nu_t, \overline\zeta_t)_{t \in[0,1]}$ that realizes the infimum in \eqref{eq:def-Wd}. 
 Moreover, $\Wd$ defines a metric on $\Meas$, and the associated curve $(\overline\mu_t)_{t \in[0,1]}$ is a constant speed geodesic for $\Wd$, \ie, 
 \begin{align*}
  \Wd[\overline\mu_s, \overline\mu_t] = |s-t| \Wd[\mu_A, \mu_B] 
 \end{align*}
 for all $s,t \in [0,1]$. Furthermore, we have the alternative characterization
 \begin{align*}
  \Wd[\mu_A, \mu_B]
    := \inf_{\mu,\nu,\zeta} \bigg\{ \int_0^1  \sqrt{\cA_\penaltyPushforward[\mu_t, \nu_t, \zeta_t] }  \d t \, : \,
        \begin{array}{c}
         (\mu_t, \nu_t, \zeta_t)_{t \in[0,1]} \in \CE[0,1] \, ,\\
         \mu_0 = \mu_A \, , \; \mu_1 = \mu_B 
        \end{array}
   \bigg\} \, .
 \end{align*}
\end{Theorem}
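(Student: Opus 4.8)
The plan is to run the direct method on top of the compactness and lower semicontinuity already provided by Proposition~\ref{prop:compact}, and then to deduce the metric and geodesic properties from the classical reparametrization and concatenation arguments of the measure-valued Benamou--Brenier theory (following \cite{DNS09}).

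\textbf{Step 1 (finiteness and existence of a minimizer).} First I would exhibit a single competitor of finite energy, which yields $\Wd[\mu_A,\mu_B]<\infty$. For instance, shrink the mass of $\mu_A$ linearly to zero on $[0,\tfrac12]$ and let the mass of $\mu_B$ grow linearly from zero on $[\tfrac12,1]$: take $\nu\equiv0$, $\mu_t=(1-2t)\mu_A$, $\zeta_t=-2\mu_A$ for $t\le\tfrac12$, and $\mu_t=(2t-1)\mu_B$, $\zeta_t=2\mu_B$ for $t\ge\tfrac12$. This triple lies in $\CE[0,1]$ with the prescribed endpoints, has $\ABB\equiv0$, and $\AM[\zeta_t]$ is uniformly bounded since $r$ has linear growth and $|\zeta_t|(\domain)\le 2\max\{\mu_A(\domain),\mu_B(\domain)\}$; hence $\Ed<\infty$. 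Given then a minimizing sequence $(\mu^n,\nu^n,\zeta^n)\in\CE[0,1]$ with endpoints $\mu_A,\mu_B$, Proposition~\ref{prop:compact} extracts a subsequence converging to some $(\mu,\nu,\zeta)\in\CE[0,1]$; since $\mu^n_0=\mu_A$ and $\mu^n_1=\mu_B$ for all $n$ and $\mu^n_t\weaklyStar\mu_t$ for every $t$, the endpoints are preserved, and the lower semicontinuity \eqref{eq:lsc} gives $\Ed[\mu,\nu,\zeta]\le\liminf_n\Ed[\mu^n,\nu^n,\zeta^n]=\Wd[\mu_A,\mu_B]^2$. As the limit is admissible, it realizes the infimum.

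\textbf{Step 2 ($\Wd$ is a metric).} Symmetry: the time reversal $(\mu_t,\nu_t,\zeta_t)\mapsto(\mu_{1-t},-\nu_{1-t},-\zeta_{1-t})$ again solves the weak continuity equation and leaves $\cA_\penaltyPushforward$ invariant, because $\BBfunction$ is even in the momentum and the integrands of $\AM$ are even. Non-degeneracy: if $\mu_A=\mu_B$ the stationary triple $(\mu_A,0,0)$ has zero energy; conversely, if $\Wd[\mu_A,\mu_B]=0$ the minimizer from Step~1 satisfies $\ABB[\mu_t,\nu_t]=0=\AM[\zeta_t]$ for a.e.\ $t$, whence $\nu_t=0$ (since $\BBfunction(\u,m)=0\Rightarrow m=0$) and $\zeta_t=0$ (since $r(z)=0\iff z=0$), so the continuity equation reduces to $\partial_t\mu=0$ and $\mu_A=\mu_0=\mu_1=\mu_B$. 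Triangle inequality: given almost-optimal paths from $\mu_A$ to $\mu_B$ and from $\mu_B$ to $\mu_C$ with energies $E_{AB}$ and $E_{BC}$, reparametrize them affinely onto $[0,\lambda]$ and $[\lambda,1]$ and concatenate; since $\cA_\penaltyPushforward$ rescales by the square of the derivative of the time change (this is precisely where $1$-homogeneity of $\BBfunction$, and of the source integrand in the model case $r(s)=|s|$, enters), the glued competitor has energy at most $\lambda^{-1}E_{AB}+(1-\lambda)^{-1}E_{BC}$; minimizing in $\lambda\in(0,1)$ yields $\Wd[\mu_A,\mu_C]\le\sqrt{E_{AB}}+\sqrt{E_{BC}}$, and taking infima over the two paths gives the triangle inequality.

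\textbf{Step 3 (length formula and constant-speed geodesic).} For any competitor, Jensen's inequality gives $\Ed[\mu,\nu,\zeta]=\int_0^1\cA_\penaltyPushforward[\mu_t,\nu_t,\zeta_t]\d t\ge\big(\int_0^1\sqrt{\cA_\penaltyPushforward[\mu_t,\nu_t,\zeta_t]}\d t\big)^2$, hence $\Wd[\mu_A,\mu_B]\ge\inf\int_0^1\sqrt{\cA_\penaltyPushforward[\mu_t,\nu_t,\zeta_t]}\d t$; conversely, reparametrizing a competitor proportionally to its instantaneous speed $\sqrt{\cA_\penaltyPushforward[\mu_\sigma,\nu_\sigma,\zeta_\sigma]}$ yields a constant-speed curve for which Jensen holds with equality, whose energy equals $\big(\int_0^1\sqrt{\cA_\penaltyPushforward}\d t\big)^2$; this proves the alternative characterization. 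Applying the equality case of Jensen to the minimizer $(\overline\mu_t,\overline\nu_t,\overline\zeta_t)$ shows $\cA_\penaltyPushforward[\overline\mu_t,\overline\nu_t,\overline\zeta_t]=\Wd[\mu_A,\mu_B]^2$ for a.e.\ $t$; restricting the minimizer to $[s,t]$ and rescaling to $[0,1]$ gives $\Wd[\overline\mu_s,\overline\mu_t]\le|s-t|\,\Wd[\mu_A,\mu_B]$, while the reverse inequality follows from the triangle inequality along $0\le s\le t\le1$. Hence $(\overline\mu_t)_{t\in[0,1]}$ is a constant-speed geodesic.

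\textbf{Expected main obstacle.} The real analytic work---extracting a limit curve of measures that still disintegrates in time, still solves the weak continuity equation, and over which the action is lower semicontinuous---is precisely the content of Proposition~\ref{prop:compact}, so the theorem itself is largely bookkeeping built on it. The one genuinely delicate step is the time-reparametrization of a measure-valued curve: one must verify that the rescaled triple still belongs to $\CE[0,1]$ and that $\cA_\penaltyPushforward$ transforms with the expected power of the derivative of the time change (immediate for $1$-homogeneous integrands such as $r(s)=|s|$), and one must handle separately the pieces on which $\cA_\penaltyPushforward$ vanishes, where the arc-length reparametrization degenerates.
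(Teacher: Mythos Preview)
Your approach matches the paper's: apply Proposition~\ref{prop:compact} for existence of a minimizer, then invoke the standard concatenation/reparametrization arguments of \cite{DNS09} for the metric and geodesic statements. The paper uses the slightly simpler competitor $\mu_t=(1-t)\mu_A+t\mu_B$, $\nu=0$, $\zeta_t\equiv\mu_B-\mu_A$ in place of your shrink-then-grow path, and defers Steps~2--3 entirely to \cite[Theorem~5.4]{DNS09}; your explicit caveat that the time-reparametrization step needs $1$-homogeneity of the source integrand (i.e., the model case $r(s)=|s|$) is in fact more careful than the paper, which does not flag this hypothesis.
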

\begin{proof}

 The linear interpolation $\left\{ \mu_t = (1-t) \mu_A + t \mu_B \right\}_{t \in [0,1]}$ together with $\nu = 0$ and $\zeta = \mu_B - \mu_A$ is an admissible triple for the set $\CE[0,1]$ with finite energy, since the assumptions on $\ling$ imply that
 \begin{align}
  \Ed[\mu,\nu,\zeta] \leq C \big(1 + |\mu_B - \mu_A|(\domain)\big) ^2 < \infty \, .
 \end{align}
It follows that $\Wd[\mu_A, \mu_B] < \infty$, and the existence of a minimizer is an immediate consequence of Proposition \ref{prop:compact}. 
 The remaining statements follow by standard arguments, see \cite[Theorem 5.4]{DNS09} for details.
\end{proof}

\section{Proximal splitting algorithm}\label{sec:numeric}
In this section we derive a numerical scheme to compute geodesics for our new distance introduced in \eqref{eq:def-Wd} for $d=2$.
To this end, we will adapt the proximal splitting algorithm, which was proposed by Papadakis et al. \cite{papadakis2014optimal} for the classical $L^2$ optimal transport problem.
In detail,  the constraint optimization problem is first rewritten as a non-constraint minimization problem adding the indicator function of the set of solutions of the continuity equation 
$\CE[0,1]$ to the cost functional.
Then, the proximal splitting algorithm yields a solution scheme, which only requires to solve a space-time elliptic problem and to project pointwise onto a convex set.
The resulting algorithm is equivalent to the augmented Lagrangian approach in \cite{BeBr00}.
Different from  \cite{BeBr00, papadakis2014optimal} we will use a finite element discretization instead of finite differences.

Let us briefly recall the definition and the basic properties of a proximal mapping (see for instance \cite{combettes2011proximal, papadakis2014optimal}).
In the following, let $(X. \| \cdot \|_X)$ be a Hilbert space and $f:X \to \R \cup \{ \infty \}$ a convex and lsc function.
Then the proximal mapping of $f$ is defined as
 \begin{align}
  \prox_f(x) = \argmin_{y \in X} f(y) + \frac{1}{2} \|x-y\|^2 \, .
 \end{align}
In the sequel it will be important to compute the proximal mapping of the indicator function $\indicatorFct_K$ of a convex set $K \subset X$, which is just given by
$ \prox_{\indicatorFct_K}(x) =  \projection_K(x)\,$,
 where $\projection_K$ is the orthogonal projection on $K$ with respect to the norm $\| \cdot \|_X$. 
Now, we suppose that $D$ is a polygonal domain and 
consider a tetrahedral mesh $S_h$ with grid size $h$ for the space time domain $[0,1]\times D$, which is generated from a triangular mesh for the domain $D$ via subdivision of 
prisms $(k\,h, (k+1)h) \times T$ (with $T$ being a triangle) into $3$ tetrahedrons such that the resulting tetrahedral mesh is an admissible triangulation in space time.
On this triangulation we define fintie element spaces
\begin{align}
 & V^1( S_h ) = \{ \phi_h : [0,1] \times \domain \to \R \; : \; \phi_h \text{ continuous and piecewise linear on elements in } S_h \} \, ,\\
 & V^0( S_h ) = \{ \u_h : [0,1] \times \domain \to \R \; : \; \u_h \text{ piecewise constant on elements in } S_h \} \,.
\end{align}
This allows us define discretization 
\begin{align*}
 \u_h \in V^0(S_h) \, , \quad
 m_h \in \left( V^0(S_h) \right)^d \, , \quad
 z_h \in V^1(S_h) \, ,
\end{align*} for the measures for mass, momentum and source, respectively. 
Furthermore, we will use the notation $p_h = (\u_h, m_h) \in V^0(S_h)^{d+1}$.
For a triple $(\u_h, m_h, z_h) \in V_h^0(S_h) \times V_h^0(S_h)^d \times V_h^1(S_h)$ we choose a weighted $L^2$ norm
\begin{align*}
  \left\| (\u_h, m_h, z_h) \right\| := 
  \left( \int_0^1 \int_\domain |\u_h|^2 + |m_h|^2 + \frac{1}{\penaltyPushforward} |z_h|^2 \d x \d t\right)^{\frac{1}{2}} \, ,
\end{align*}
which can be computed exactly by choosing a quadrature rule of at least second order.
In correspondence to Definition \ref{def:cont-eq} the set of discrete solutions of a continuity equation is defined as follows:
\begin{Definition}
Let $\u_A, \u_B \in V^0(S_h)$ be given. 
Then, the set $\CE_h$ of solutions of a weak continuity equation with source term and boundary values $\u_A, \u_B$ is given by all triples $(\u_h,m_h,z_h) \in V_h^0(S) \times V_h^0(S)^d \times V_h^1(S)$ satisfying
\begin{align*}
 \int_0^1 \int_\domain \u_h \partial_t \phi_h + m_h \nabla_x \phi_h + z \phi_h \d x \d t = \int_\domain \phi_h(1) \u_B - \phi_h(0) \u_A \d x 
 \quad \forall \phi_h \in V^1( S_h ) \, .
\end{align*}
\end{Definition}
Note that we used Neumann boundary condition in space.
The approach can easily be adopted in case of Dirichlet or periodic boundary conditions.

Now, we can state a discrete version of the minimization problem \eqref{eq:def-Wd}:
\begin{align}
 \inf\limits_{(\u_h, m_h,z_h) \in \CE_h } 
 \left( \int_0^1 \int_\domain \Phi  (\u_h, m_h) \d x \d t + \frac{1}{\penaltyPushforward} \int_0^1 \left( \int_\domain \Ling(z_h) \d x \right)^2 \d t \right) \, ,
\end{align}
where $\Ling[z_h]$ denotes a suitable interpolation of $\ling(z_h)$. Here, we define 
$\Ling[z_h](t,x)$ as the piecewise affine interpolation of  $\ling(z_h((k-1)h,\cdot))$ on the triangle $T$ 
for $(t,x) \in (kh,(k+1)h) \times T$ (one of the prisms underlying the tetrahedral grid).
Numerically, we are not able to treat singular measures as presented in Section \ref{sec:existence}.
Our concrete choices of $r(s)$, which coincide with $|s|$ for large $s$ allow to approximate such measures in the source term cost 
supported on the union of the support of basis functions. Thus point or line sources are numerically treated via sources with support thickness $2h$.
To apply a proximal splitting algorithm, we split the functional into
\begin{align}
 & F_1(\u_h,m_h,z_h) := F_{\text{trans}}(\u_h,m_h) + \frac{1}{\penaltyPushforward} F_{\text{source}}(z_h) \\
  &\qquad  \mbox{with } F_{\text{trans}}(\u_h,m_h) := \int_0^1\!\! \int_\domain \Phi (\u_h, m_h) \d x \d t \,,\;
   F_{\text{source}}(z_h) := \int_0^1\!\! \left( \int_\domain \Ling[z_h] \d x \right)^2 \!\!\d t\,, \\
  & F_2(\u_h,m_h,z_h) := I_{\CE_h}(\u_h,m_h,z_h) \, .
\end{align}
Next, let us compute the proximal mappings of $F_1$ and $F_2$.\medskip

\noindent \textbf{Proximal map of $F_2$.}
The computation of the proximal mapping of the indicator function of $\CE_h$ requires the orthogonal projection of a point 
$\left( p_h = (\u_h, m_h), z_h \right ) \in V^0(S)^{d+1} \times V^1(S)$ onto $\CE_h$, i.e. we ask for  
$(p^\ast_h, z^\ast_h) \in  \argmin_{(q_h, w_h) \in \CE_h} \left\| (p_h, z_h) - (q_h, w_h) \right\|^2$.
The associated  Lagrangian is given by
\begin{align}
 L[(q_h,w_h),\psi_h] 
 = \left\| (p_h, z_h) \!-\! (q_h, w_h) \right\|^2 \!
 - \! \int\limits_0^1\!\! \int\limits_\domain\! q_h \cdot\nabla_{\!(t,x)} \psi_h\! +\! w_h \psi_h \d x \d t +\! \int\limits_\domain \!\psi_h(1) \u_B\! -\! \psi_h(0) \u_A \d x \, ,
\end{align}
with a Lagrange multiplier $\psi_h \in V^1(S_h)$. In terms of this Lagrangian 
the projection problem can be written as a saddle point problem, where as ask for $(p^\ast_h, z^\ast_h, \phi^\ast_h) \in V^0(S)^{d+1} \times V^1(S) \times V^1(S)$, such that
\begin{align}
L[(p^\ast_h, z^\ast_h, \phi^\ast_h)] = \min_{ (q_h, w_h) \in V_h^0(S)^{d+1} \times V_h^1(S) } \max_{\psi_h \in V_h^1(S) } L[(q_h,w_h,\psi_h)] \, .
\end{align}
The Euler-Lagrange equations corresponding to this saddle point problem are given by
\begin{alignat}{2}
 & \int_0^1 \int_\domain p^\ast_h \cdot \nabla_{\!(t,x)} \psi_h + z^\ast_h \psi_h \d x \d t = \int_\domain \psi_h(1) \,\u_B - \psi_h(0)\, \u_A \d x \quad && \forall \psi_h \in V_h^1(S) \label{EL-1}\\
 & \int_0^1 \int_\domain  q_h \cdot \nabla_{\!(t,x)} \phi^\ast_h\d x \d t = \int_0^1 \int_\domain 2 (p^\ast_h - p_h ) \, q_h \d x \d t \quad && \forall q_h \in V_h^0(S)^{d+1} \label{EL-2} \\
 & \int_0^1 \int_\domain \phi^\ast_h w_h \d x \d t = \int_0^1 \int_\domain \frac{2}{\penaltyPushforward} (z^\ast_h - z_h) \,w_h \d x \d t \quad && \forall w_h \in V_h^1(S) \label{EL-3}
\end{alignat}
Testing equation \eqref{EL-2} with $q_h = \nabla_{\!(t,x)}  \psi_h$ and then using equation \eqref{EL-1} gives
\begin{align}
 \int_0^1 \int_\domain \frac{1}{2} \nabla_{\!(t,x)}  \phi^\ast_h \cdot \nabla_{\!(t,x)}  \psi_h \d x \d t
 = & \int_0^1 \int_\domain (p^\ast_h - p_h) \cdot \nabla_{\!(t,x)}  \psi_h \d x \d t \\
 = & \int_\domain \psi_h(1) \u_B - \psi_h(0) \u_A \d x - \int_0^1 \int_\domain z^\ast_h \psi_h + p_h \cdot \nabla_{\!(t,x)}  \psi_h \d x \d t
\end{align}
Hence, by using equation \eqref{EL-3}  ( $z^\ast_h = z_h + \frac{\penaltyPushforward}{2} \phi^\ast_h$) we obtain
\begin{align}
 \int_0^1 \int_\domain \frac{1}{2} \nabla_{\!(t,x)}  \phi^\ast_h \nabla_{\!(t,x)}  \psi_h + \frac{\penaltyPushforward}{2} \phi^\ast_h \psi_h \d x \d t = &
 \int_\domain \psi_h(1) \u_B - \psi_h(0) \u_A \d x \\
 & - \int_0^1 \int_\domain z_h \psi_h + p_h \nabla_{\!(t,x)}  \psi_h \d x \d t
\end{align}
for all $\psi_h \in V_h^1(S) $.\\
After computing $\phi_h$ the solution of the projection problem is given by
\begin{align}
p^\ast_h  = p_h + \frac{1}{2} \nabla_{\!(t,x)}  \phi^\ast_h \,,\quad
z^\ast_h  = z_h + \frac{\penaltyPushforward}{2} \phi^\ast_h\,.
\end{align}

\noindent \textbf{Proximal map of $F_1$.} 
The transport term $F_{\text{trans}}$ does only depend on $\u_h$ and $m_h$ and can be treated exactly as for classical optimal transport.
Since we observe pointwise that $\Phi^{*} = I_K$ is an indicator function of the convex set  
$
 K = \left\{ (\u,m) \; : \; \u + \frac{|m|^2}{4} \leq 0 \right\} \, ,
$
(see \cite{BeBr00}) we can use Moreau's identity and get
\begin{align}
 \prox_{\Phi} (\u, m) = (\u,m) - \prox_{\Phi^{*}} (\u, m ) = (\u,m) - \projection_K \left( \u, m \right) \; .
\end{align}
The projection onto $K$ separately on each tetrahedron of the simplicial mesh $S_h$ 
due to the choice of our finite element spaces with $p_h \in V^0(S_h)^{d+1}$.

We note that for a source term in $L^2$ both in space and time we easily get a pointwise update
\begin{align}
 \prox_{ \frac{\gamma}{\penaltyPushforward} |\cdot|^2 } (z) 
 = \argmin_{w} \frac{1}{\penaltyPushforward} |w|^2 + \frac{1}{\penaltyPushforward} |w-z|^2
 = \frac{1}{1 + \gamma} z \, .
\end{align}
Following the computation in \cite{esser2009applications} we also get a pointwise update for the proximal operator of the source term in $L^1(L^1)$, which is given by
\begin{align}
  \prox_{ \frac{\gamma}{\penaltyPushforward} |z| } (z) 
  = \left\{ 
    \begin{array}{cl}
     0 \, , & \mbox{if } |z| \leq \frac{\gamma}{2} \, \\
      z - \frac{\gamma}{2} sgn(z) \, , & \mbox{else. }
    \end{array}
    \right.
\end{align}

Thus, a numerical scheme for a source term in $L^1(L^1)$ would be as simple as for a source term in $L^2(L^2)$, but existence of geodesics is not guaranteed (see Remark~\ref{remark:L1L1}).
In case of a linear growth function $\ling(\cdot)$ the minimization problem only decouples in time but not in space.
Hence, for each discrete time step $k$  we have to solve 
\begin{align}
 \argmin_{w_h } \frac{\gamma}{\penaltyPushforward} \left( \int_\domain \Ling[w_h](kh,x)\d x \right)^2
 + \frac{1}{2 \penaltyPushforward} \int_\domain |w_h(kh,x) - z_h(kh,x)|^2 \d x \, .
\end{align}
For a source term in $L^2(L^1)$ the minimization problem is well defined, but since $r(z) = |z|$ is not differentiable it is not clear how to find the minimizer.
Therefore we restrict our numerical computations to the case of $r$ being the Huber function and use a gradient descent to compute this minimum. 
\medskip

\noindent \textbf{Douglas-Rachford splitting algorithm.} 
Finally, to solve the minimization problem
\begin{align}
(p^\ast_h, z^\ast_h) \in \argmin_{(q_h,w_h) \in V^0(S)^{d+1} \times V^1(S)} F_1(q_h,w_h) + F_2(q_h,w_h) 
\end{align}
we apply the Douglas-Rachford splitting algorithm \cite{eckstein1992douglas}, which is given by the iteration 
\begin{align}\label{algo:douglasrachford}
 (q_h^{n},w_h^{n}) & = \prox_{\gamma F_2} ( (p_h^{n-1},z_h^{n-1}) ) \, ,\\
 (p_h^n, z_h^n)  & = (p_h, z_h)^{n-1} + \alpha \left( \prox_{\gamma F_1}(2 (q_h^n, w_h^n) - (p_h, z_h)^{n-1}) - (q_h, w_h)^n \right) \, ,
\end{align} 
for an initial value $(p^0_h, z^0_h)$ and a step size weight $\alpha$ have to be chosen.
It is guaranteed that for $\gamma > 0$ and $\alpha \in (0,2)$ the sequence $(p^n_h, z^n_h)$ as well as $(q^n_h, w_h)^n$ converges to a solution of the minimization problem.

\section{Numerical results}\label{sec:numericalResults}
We have applied the proposed scheme for the optimal transport with source term for different sets of $(\u_A,\u_B)$.
In all computations $D=(0,1)^2$ and the grid size is $h=2^{-7}$.
At first, we demonstrate that the density function $r$ for the source term is the right choice to deal with approximations of singular measures as source terms.
To this end, we consider in Figure~\ref{fig:singularMeasure} measures $\u_A$ and $\u_B$ supported on a thin rectangular strip with constant but different density.
The proposed model with the $L^2$--Huber ($L^2(H)$) type cost functional $\int_0^1 (\int _D r(\u) \d x )^2 \d t$ for the source term is 
able to generate the required singular measure and 
the geodesic Wasserstein geodesic is just given by a blending of the two measure $\u_A$ and $\u_B$. 
The generating of singular sources is not possible for an $L^2(L^2)$ type cost functional in space time,
which was proposed in \cite{RuSc12}.
Indeed, chosen the cost functional $\int_0^1 \int_D z^2 \d x \d t$ for the same data, the generation of mass via the source term takes place on a thick super set of the 
rectangular strip and then transported toward to strip.
We also observe a similar effect for absolutely continuous measures. In Figure~\ref{fig:squareMeasure} we compare the $L^2(H)$ and the $L^2(L^2)$ source term for geodesics 
connecting differently scaled characteristic functions of a square. 
Figure~\ref{fig:squareMeasureSource} shows a plot of $t \mapsto \int_D |z(t, \cdot )| \d x$ for both models.
Let us remark, that numerical diffusion in particular on coarse meshes leads to a blurring effect for the source term at discontinuities of the density which 
is then accompanied by minor transport to compensate for this diffusion.

\begin{figure}[!h]
\setlength{\unitlength}{0.05\textwidth}
\centering
\resizebox{1.0\linewidth}{!}{
  \begin{minipage}[h]{1.1\textwidth}
  \begin{picture}(20,2.4)
    \put(0,1){ $L^2(H)$ }
    \put(2,0){\fbox{\includegraphics[width=0.08\textwidth]{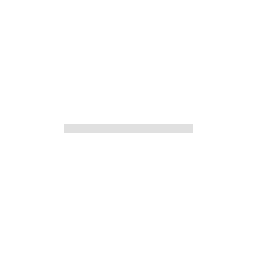}}}
    \put(4.2,0){\fbox{\includegraphics[width=0.08\textwidth]{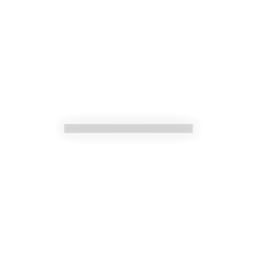}}}
    \put(6.4,0){\fbox{\includegraphics[width=0.08\textwidth]{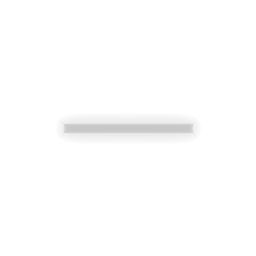}}}
    \put(8.6,0){\fbox{\includegraphics[width=0.08\textwidth]{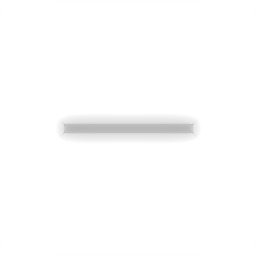}}}
    \put(10.8, 0){\fbox{\includegraphics[width=0.08\textwidth]{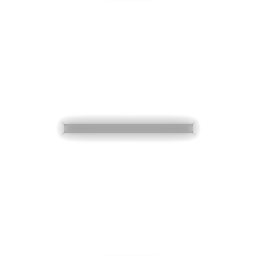}}}
    \put(13,0){\fbox{\includegraphics[width=0.08\textwidth]{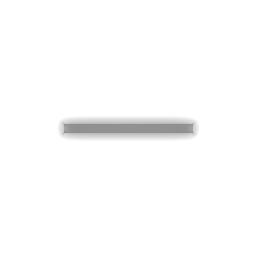}}}
    \put(15.2,0){\fbox{\includegraphics[width=0.08\textwidth]{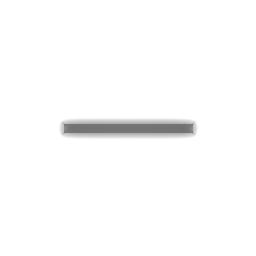}}}
    \put(17.4,0){\fbox{\includegraphics[width=0.08\textwidth]{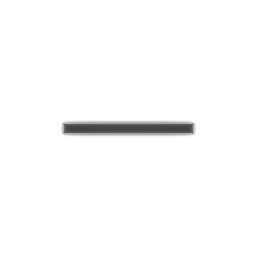}}}
    \put(19.6,0){\fbox{\includegraphics[width=0.08\textwidth]{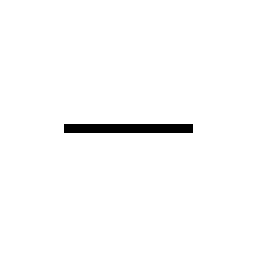}}}
  \end{picture}
  \end{minipage}
}
\resizebox{1.0\linewidth}{!}{
  \begin{minipage}[h]{1.1\textwidth}
  \begin{picture}(20,2.4)
    \put(0,1){ $L^2(L^2)$ }
    \put(2,0){\fbox{\includegraphics[width=0.08\textwidth]{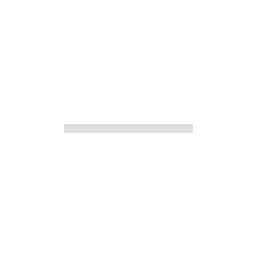}}}
    \put(4.2,0){\fbox{\includegraphics[width=0.08\textwidth]{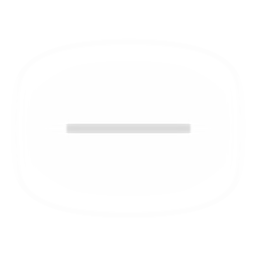}}}
    \put(6.4,0){\fbox{\includegraphics[width=0.08\textwidth]{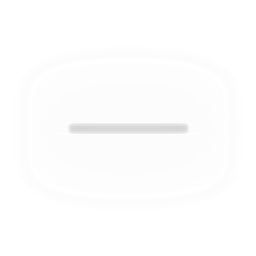}}}
    \put(8.6,0){\fbox{\includegraphics[width=0.08\textwidth]{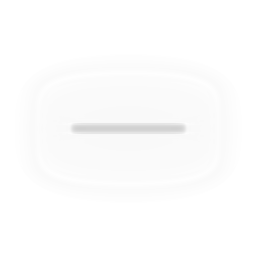}}}
    \put(10.8, 0){\fbox{\includegraphics[width=0.08\textwidth]{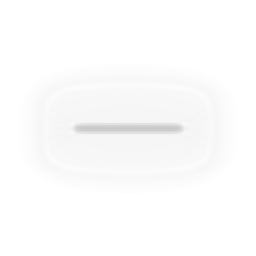}}}
    \put(13,0){\fbox{\includegraphics[width=0.08\textwidth]{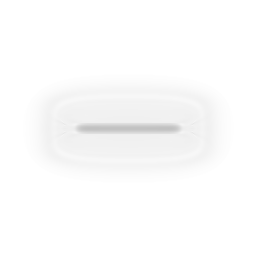}}}
    \put(15.2,0){\fbox{\includegraphics[width=0.08\textwidth]{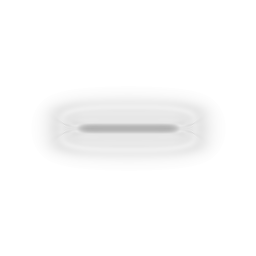}}}
    \put(17.4,0){\fbox{\includegraphics[width=0.08\textwidth]{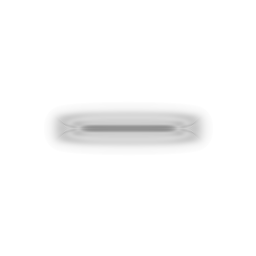}}}
    \put(19.6,0){\fbox{\includegraphics[width=0.08\textwidth]{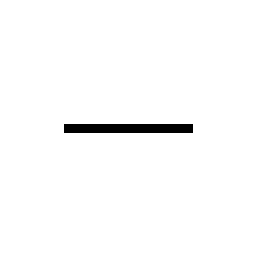}}}
  \end{picture}
  \end{minipage}
}
\caption{Optimal transport geodesic between approximations of singular measures with different intensity. Here the source term parameter is $\penaltyPushforward = 10^{0}$.}
\label{fig:singularMeasure}
\end{figure}

\begin{figure}[!h]
\setlength{\unitlength}{0.05\textwidth}
\centering
\resizebox{1.0\linewidth}{!}{
  \begin{minipage}[h]{1.1\textwidth}
  \begin{picture}(20,4.4)
    \put(0,3){ $L^2(H)$ }
    \put(2,2.4){\fbox{\includegraphics[width=0.08\textwidth]{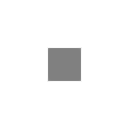}}}
    \put(4.2,2.4){\fbox{\includegraphics[width=0.08\textwidth]{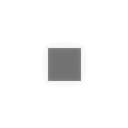}}}
    \put(6.4,2.4){\fbox{\includegraphics[width=0.08\textwidth]{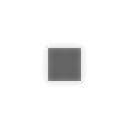}}}
    \put(8.6,2.4){\fbox{\includegraphics[width=0.08\textwidth]{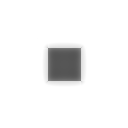}}}
    \put(10.8,2.4){\fbox{\includegraphics[width=0.08\textwidth]{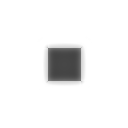}}}
    \put(13,2.4){\fbox{\includegraphics[width=0.08\textwidth]{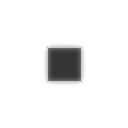}}}
    \put(15.2,2.4){\fbox{\includegraphics[width=0.08\textwidth]{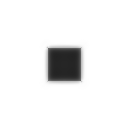}}}
    \put(17.4,2.4){\fbox{\includegraphics[width=0.08\textwidth]{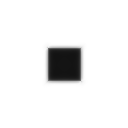}}}
    \put(19.6,2.4){\fbox{\includegraphics[width=0.08\textwidth]{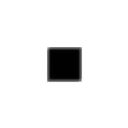}}}
    \put(4.2,0){\fcolorbox{white}{black}{\includegraphics[width=0.08\textwidth]{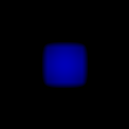}}}
    \put(6.4,0){\fcolorbox{white}{black}{\includegraphics[width=0.08\textwidth]{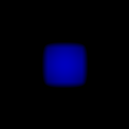}}}
    \put(8.6,0){\fcolorbox{white}{black}{\includegraphics[width=0.08\textwidth]{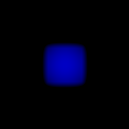}}}
    \put(10.8, 0){\fcolorbox{white}{black}{\includegraphics[width=0.08\textwidth]{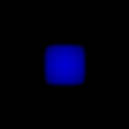}}}
    \put(13,0){\fcolorbox{white}{black}{\includegraphics[width=0.08\textwidth]{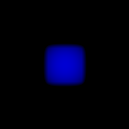}}}
    \put(15.2,0){\fcolorbox{white}{black}{\includegraphics[width=0.08\textwidth]{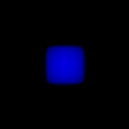}}}
    \put(17.4,0){\fcolorbox{white}{black}{\includegraphics[width=0.08\textwidth]{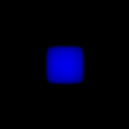}}}
    \put(19.6,0){\fcolorbox{white}{black}{\includegraphics[width=0.08\textwidth]{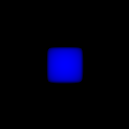}}}
  \end{picture}
  \end{minipage}
}
\resizebox{1.0\linewidth}{!}{
  \begin{minipage}[h]{1.1\textwidth}
  \begin{picture}(20,4.8)
    \put(0,3){ $L^2(L^2)$ }
    \put(2,2.4){\fbox{\includegraphics[width=0.08\textwidth]{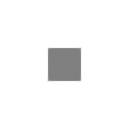}}}
    \put(4.2,2.4){\fbox{\includegraphics[width=0.08\textwidth]{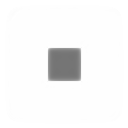}}}
    \put(6.4,2.4){\fbox{\includegraphics[width=0.08\textwidth]{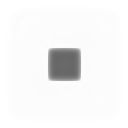}}}
    \put(8.6,2.4){\fbox{\includegraphics[width=0.08\textwidth]{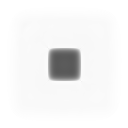}}}
    \put(10.8,2.4){\fbox{\includegraphics[width=0.08\textwidth]{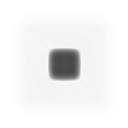}}}
    \put(13,2.4){\fbox{\includegraphics[width=0.08\textwidth]{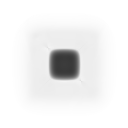}}}
    \put(15.2,2.4){\fbox{\includegraphics[width=0.08\textwidth]{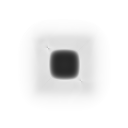}}}
    \put(17.4,2.4){\fbox{\includegraphics[width=0.08\textwidth]{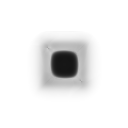}}}
    \put(19.6,2.4){\fbox{\includegraphics[width=0.08\textwidth]{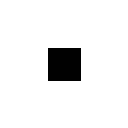}}}
    \put(4.2,0){\fcolorbox{white}{black}{\includegraphics[width=0.08\textwidth]{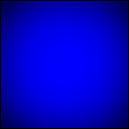}}}
    \put(6.4,0){\fcolorbox{white}{black}{\includegraphics[width=0.08\textwidth]{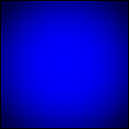}}}
    \put(8.6,0){\fcolorbox{white}{black}{\includegraphics[width=0.08\textwidth]{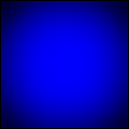}}}
    \put(10.8, 0){\fcolorbox{white}{black}{\includegraphics[width=0.08\textwidth]{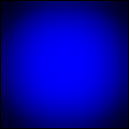}}}
    \put(13,0){\fcolorbox{white}{black}{\includegraphics[width=0.08\textwidth]{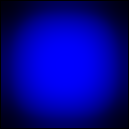}}}
    \put(15.2,0){\fcolorbox{white}{black}{\includegraphics[width=0.08\textwidth]{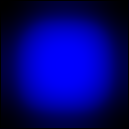}}}
    \put(17.4,0){\fcolorbox{white}{black}{\includegraphics[width=0.08\textwidth]{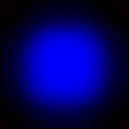}}}
    \put(19.6,0){\fcolorbox{white}{black}{\includegraphics[width=0.08\textwidth]{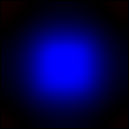}}}
  \end{picture}
  \end{minipage}
}
\caption{Optimal transport geodesic and corresponding source terms between two characteristic functions of squares with different intensity. Here the source term parameter is $\penaltyPushforward = 10^{0}$.}
\label{fig:squareMeasure}
\end{figure}

 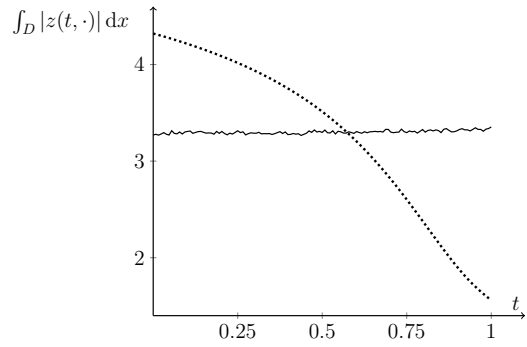
\begin{figure}[h!]
\hspace*{16ex}
\resizebox{0.4\linewidth}{!}{
\begin{minipage}{0.7\linewidth}
\begin{tikzpicture}
   \begin{axis}
     [ width=1.0\linewidth,
      at={(0.0\linewidth,0)},
        xmin=0.0, xmax=1.1,
        ymin=1.4, ymax=4.6,
        every axis/.append style={font=\Large},
        axis lines=middle,
        axis line style = thick,
        axis line style={->},
        label style = {font=\normalsize},
        y label style={anchor=east},
        y label style={at={(-0.05,0.95)}},
        xlabel={\Large  $t$},
        ylabel={ \Large $\int_D |z(t, \cdot )| \d x$},
        xtick={0.25,0.5,0.75,1},
        ytick={2,3,4},
     ]
     \addplot [thick] table[x expr=\thisrowno{0} / 128, y expr=\thisrowno{1} * 100 / 16384] {images/generateMass/Sqaure-Width1_4/Huber_eps1-4_del1/L1NormSourceTerm.txt};
     \addplot [dotted, ultra thick] table[x expr=\thisrowno{0} / 128, y expr=\thisrowno{1} * 100 / 16384] {images/generateMass/Sqaure-Width1_4/L2_del1/L1NormSourceTerm.txt};
    \end{axis}
\end{tikzpicture}
\end{minipage}
}
\caption{Distribution of the $L^1$-norm of the source term in time for the example in Figure~\ref{fig:squareMeasure}. (continuous line: $L^2(H)$, dotted line: $L^2(L^2)$) }
\label{fig:squareMeasureSource}
\end{figure}

Next, we investigate the effect of the source term parameter $\penaltyPushforward$ for the $L^2(H)$ model.
In Figure~\ref{fig:effectDeltaSquare} we choose as input data $\u_A$ at time $t=0$ a characteristic function of a square and as input data $\u_B$ at time $t=1$ the same measure density with an additional characteristic function of a translated square of the same size.  Now, optimizing the connecting path with respect to the generalized Wasserstein distance there is a competition between the curve which simply blends the second square and a curve which transports part of the second square and blends of remaining non transported measure. 
This balance between both processes depends on  $\penaltyPushforward$. 
In the limit $\penaltyPushforward \to 0$ transport becomes cheaper, 
which is reflected by the computational results for small $\delta$. 
In contrast for $\penaltyPushforward \to \infty$ transport becomes expensive 
and a simple blending can be observed for large values of $\penaltyPushforward$ in Figure~\ref{fig:effectDeltaSquare}. 
A similar effect is shown in Figure~\ref{fig:SplittingBumpFunctions}, where the a bump function in the center of the images is transported to a splitted bump function in the corners.

\begin{figure}[!h]
\setlength{\unitlength}{0.05\textwidth}
\centering
\resizebox{1.0\linewidth}{!}{
  \begin{minipage}[h]{1.0\textwidth}
  \begin{picture}(20,2.4)
    \put(0,0){\includegraphics[width=0.1\textwidth]{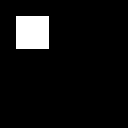}}
    \put(2.2,0){\includegraphics[width=0.1\textwidth]{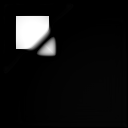}}
    \put(4.4,0){\includegraphics[width=0.1\textwidth]{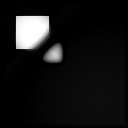}}
    \put(6.6,0){\includegraphics[width=0.1\textwidth]{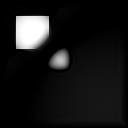}}
    \put(8.8, 0){\includegraphics[width=0.1\textwidth]{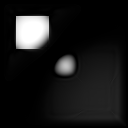}}
    \put(11,0){\includegraphics[width=0.1\textwidth]{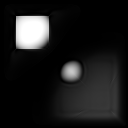}}
    \put(13.2,0){\includegraphics[width=0.1\textwidth]{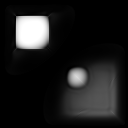}}
    \put(15.4,0){\includegraphics[width=0.1\textwidth]{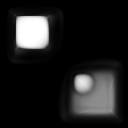}}
    \put(17.6,0){\includegraphics[width=0.1\textwidth]{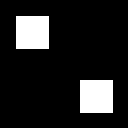}}
  \end{picture}
  \end{minipage}
}
\resizebox{1.0\linewidth}{!}{
  \begin{minipage}[h]{1.0\textwidth}
  \begin{picture}(20,2.4)
    \put(0,0){\includegraphics[width=0.1\textwidth]{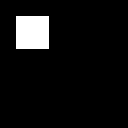}}
    \put(2.2,0){\includegraphics[width=0.1\textwidth]{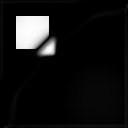}}
    \put(4.4,0){\includegraphics[width=0.1\textwidth]{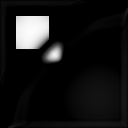}}
    \put(6.6,0){\includegraphics[width=0.1\textwidth]{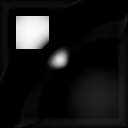}}
    \put(8.8, 0){\includegraphics[width=0.1\textwidth]{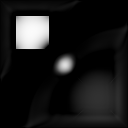}}
    \put(11,0){\includegraphics[width=0.1\textwidth]{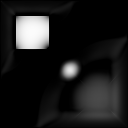}}
    \put(13.2,0){\includegraphics[width=0.1\textwidth]{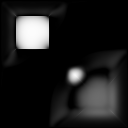}}
    \put(15.4,0){\includegraphics[width=0.1\textwidth]{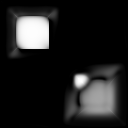}}
    \put(17.6,0){\includegraphics[width=0.1\textwidth]{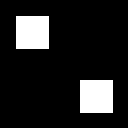}}
  \end{picture}
  \end{minipage}
}
\resizebox{1.0\linewidth}{!}{
  \begin{minipage}[h]{1.0\textwidth}
  \begin{picture}(20,2.4)
    \put(0,0){\includegraphics[width=0.1\textwidth]{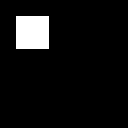}}
    \put(2.2,0){\includegraphics[width=0.1\textwidth]{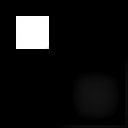}}
    \put(4.4,0){\includegraphics[width=0.1\textwidth]{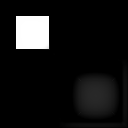}}
    \put(6.6,0){\includegraphics[width=0.1\textwidth]{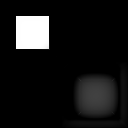}}
    \put(8.8, 0){\includegraphics[width=0.1\textwidth]{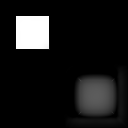}}
    \put(11,0){\includegraphics[width=0.1\textwidth]{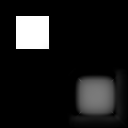}}
    \put(13.2,0){\includegraphics[width=0.1\textwidth]{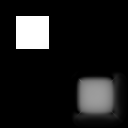}}
    \put(15.4,0){\includegraphics[width=0.1\textwidth]{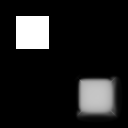}}
    \put(17.6,0){\includegraphics[width=0.1\textwidth]{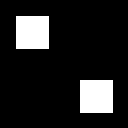}}
  \end{picture}
  \end{minipage}
}
\resizebox{1.0\linewidth}{!}{
  \begin{minipage}[h]{1.0\textwidth}
  \begin{picture}(20,2.4)
    \put(0,0){\includegraphics[width=0.1\textwidth]{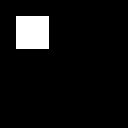}}
    \put(2.2,0){\includegraphics[width=0.1\textwidth]{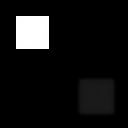}}
    \put(4.4,0){\includegraphics[width=0.1\textwidth]{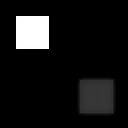}}
    \put(6.6,0){\includegraphics[width=0.1\textwidth]{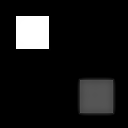}}
    \put(8.8, 0){\includegraphics[width=0.1\textwidth]{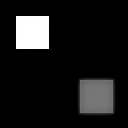}}
    \put(11,0){\includegraphics[width=0.1\textwidth]{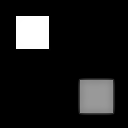}}
    \put(13.2,0){\includegraphics[width=0.1\textwidth]{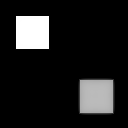}}
    \put(15.4,0){\includegraphics[width=0.1\textwidth]{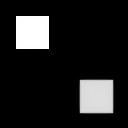}}
    \put(17.6,0){\includegraphics[width=0.1\textwidth]{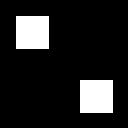}}
  \end{picture}
  \end{minipage}
}
\caption{Generalized Wasserstein geodesic between a scaled characteristic functions on a square and two differently scaled characteristic functions of squares of the same size. 
The dependance on the source term parameter is shown for the $L^2(H)$ model. From top to bottom 
$\penaltyPushforward = 10^{-2}, 10^{-1}, 10^{0}, 10^{1}$.}
\label{fig:effectDeltaSquare}
\end{figure}

\begin{figure}[!h]
\setlength{\unitlength}{0.05\textwidth}
\centering
\resizebox{1.0\linewidth}{!}{
  \begin{minipage}[h]{1.0\textwidth}
  \begin{picture}(20,2.4)
    \put(0,0){\includegraphics[width=0.1\textwidth]{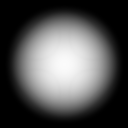}}
    \put(2.2,0){\includegraphics[width=0.1\textwidth]{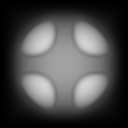}}
    \put(4.4,0){\includegraphics[width=0.1\textwidth]{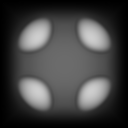}}
    \put(6.6,0){\includegraphics[width=0.1\textwidth]{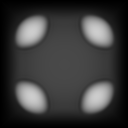}}
    \put(8.8, 0){\includegraphics[width=0.1\textwidth]{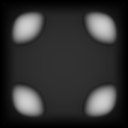}}
    \put(11,0){\includegraphics[width=0.1\textwidth]{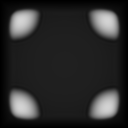}}
    \put(13.2,0){\includegraphics[width=0.1\textwidth]{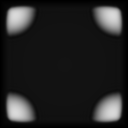}}
    \put(15.4,0){\includegraphics[width=0.1\textwidth]{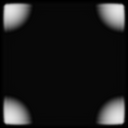}}
    \put(17.6,0){\includegraphics[width=0.1\textwidth]{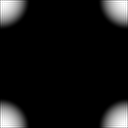}}
  \end{picture}
  \end{minipage}
}
\resizebox{1.0\linewidth}{!}{
  \begin{minipage}[h]{1.0\textwidth}
  \begin{picture}(20,2.4)
    \put(0,0){\includegraphics[width=0.1\textwidth]{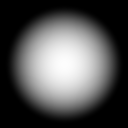}}
    \put(2.2,0){\includegraphics[width=0.1\textwidth]{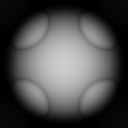}}
    \put(4.4,0){\includegraphics[width=0.1\textwidth]{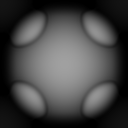}}
    \put(6.6,0){\includegraphics[width=0.1\textwidth]{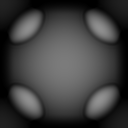}}
    \put(8.8, 0){\includegraphics[width=0.1\textwidth]{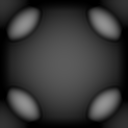}}
    \put(11,0){\includegraphics[width=0.1\textwidth]{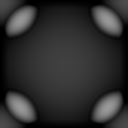}}
    \put(13.2,0){\includegraphics[width=0.1\textwidth]{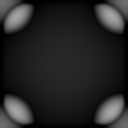}}
    \put(15.4,0){\includegraphics[width=0.1\textwidth]{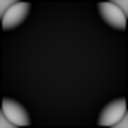}}
    \put(17.6,0){\includegraphics[width=0.1\textwidth]{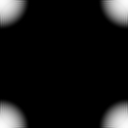}}
  \end{picture}
  \end{minipage}
}
\caption{Generalized Wasserstein geodesic connecting two translated bump functions are computed. The bump functions are periodically extended and 
$\penaltyPushforward = 10^{-2}$ (top) and $\penaltyPushforward = 10^{0}$ (bottom).}
\label{fig:SplittingBumpFunctions}
\end{figure}

In Figure~\ref{fig:3Circles} another type of interaction between generation and transport of mass is shown. The initial images at time $t=0$ consists of three scaled characteristic functions of balls, where one of this balls has smaller density value. The final image at time $t=1$ is based on the identical geometric configuration, but with swapped  densities.
Depending on the parameter $\penaltyPushforward$ a certain amount of mass is transported from the two balls with higher intensity in the image at time $t=0$. 
At the same time a blending of the transported masses as a compensation for the non balanced total mass can be observed. Figure~\ref{fig:3CirclesSource} shows plots of the functions $t\mapsto \int_D |z(t,\cdot)| \d x$, $t\mapsto \int_D z^+(t,\cdot) \d x$, and 
$t\mapsto \int_D  |z^-(t,\cdot)| \d x$ for the different values of $\delta$.

A striking observation in Figure~\ref{fig:squareMeasureSource} and 
Figure~\ref{fig:3CirclesSource} ist that 
$t\mapsto \int_D |z(t,\cdot)| \d x$ is approximately constant in time for the $L^2(H)$ model. 
This is in constrast to the $L^2(L^2)$ model as indicated in 
Figure~\ref{fig:squareMeasureSource}.

\begin{figure}[!h]
\setlength{\unitlength}{0.05\textwidth}
\centering
\resizebox{1.0\linewidth}{!}{
  \begin{minipage}[h]{1.0\textwidth}
  \begin{picture}(20,2.4)
    \put(0,0){\includegraphics[width=0.1\textwidth]{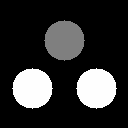}}
    \put(2.2,0){\includegraphics[width=0.1\textwidth]{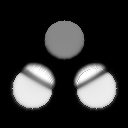}}
    \put(4.4,0){\includegraphics[width=0.1\textwidth]{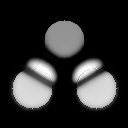}}
    \put(6.6,0){\includegraphics[width=0.1\textwidth]{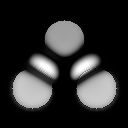}}
    \put(8.8, 0){\includegraphics[width=0.1\textwidth]{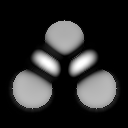}}
    \put(11,0){\includegraphics[width=0.1\textwidth]{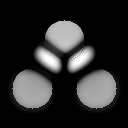}}
    \put(13.2,0){\includegraphics[width=0.1\textwidth]{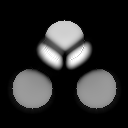}}
    \put(15.4,0){\includegraphics[width=0.1\textwidth]{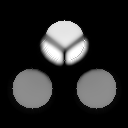}}
    \put(17.6,0){\includegraphics[width=0.1\textwidth]{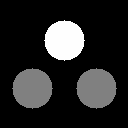}}
  \end{picture}
  \end{minipage}
}
\resizebox{1.0\linewidth}{!}{
  \begin{minipage}[h]{1.0\textwidth}
  \begin{picture}(20,2.4)
    \put(2.2,0){\includegraphics[width=0.1\textwidth]{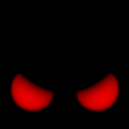}}
    \put(4.4,0){\includegraphics[width=0.1\textwidth]{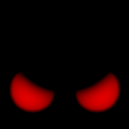}}
    \put(6.6,0){\includegraphics[width=0.1\textwidth]{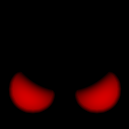}}
    \put(8.8, 0){\includegraphics[width=0.1\textwidth]{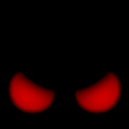}}
    \put(11,0){\includegraphics[width=0.1\textwidth]{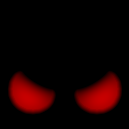}}
    \put(13.2,0){\includegraphics[width=0.1\textwidth]{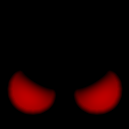}}
    \put(15.4,0){\includegraphics[width=0.1\textwidth]{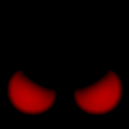}}
    \put(17.6,0){\includegraphics[width=0.1\textwidth]{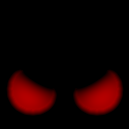}}
  \end{picture}
  \end{minipage}
}
\resizebox{1.0\linewidth}{!}{
  \begin{minipage}[h]{1.0\textwidth}
  \begin{picture}(20,2.4)
    \put(0,0){\includegraphics[width=0.1\textwidth]{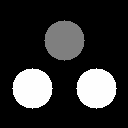}}
    \put(2.2,0){\includegraphics[width=0.1\textwidth]{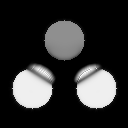}}
    \put(4.4,0){\includegraphics[width=0.1\textwidth]{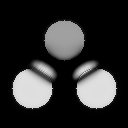}}
    \put(6.6,0){\includegraphics[width=0.1\textwidth]{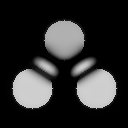}}
    \put(8.8, 0){\includegraphics[width=0.1\textwidth]{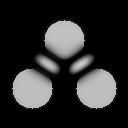}}
    \put(11,0){\includegraphics[width=0.1\textwidth]{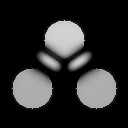}}
    \put(13.2,0){\includegraphics[width=0.1\textwidth]{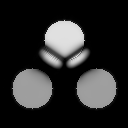}}
    \put(15.4,0){\includegraphics[width=0.1\textwidth]{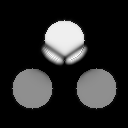}}
    \put(17.6,0){\includegraphics[width=0.1\textwidth]{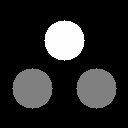}}
  \end{picture}
  \end{minipage}
}
\resizebox{1.0\linewidth}{!}{
  \begin{minipage}[h]{1.0\textwidth}
  \begin{picture}(20,2.4)
    \put(2.2,0){\includegraphics[width=0.1\textwidth]{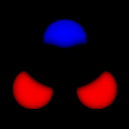}}
    \put(4.4,0){\includegraphics[width=0.1\textwidth]{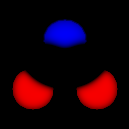}}
    \put(6.6,0){\includegraphics[width=0.1\textwidth]{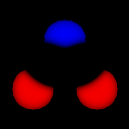}}
    \put(8.8, 0){\includegraphics[width=0.1\textwidth]{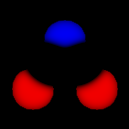}}
    \put(11,0){\includegraphics[width=0.1\textwidth]{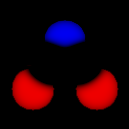}}
    \put(13.2,0){\includegraphics[width=0.1\textwidth]{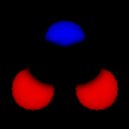}}
    \put(15.4,0){\includegraphics[width=0.1\textwidth]{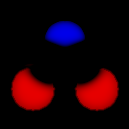}}
    \put(17.6,0){\includegraphics[width=0.1\textwidth]{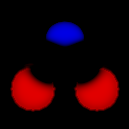}}
  \end{picture}
  \end{minipage}
}
\resizebox{1.0\linewidth}{!}{
  \begin{minipage}[h]{1.0\textwidth}
  \begin{picture}(20,2.4)
    \put(0,0){\includegraphics[width=0.1\textwidth]{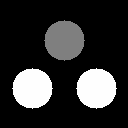}}
    \put(2.2,0){\includegraphics[width=0.1\textwidth]{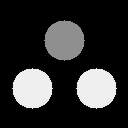}}
    \put(4.4,0){\includegraphics[width=0.1\textwidth]{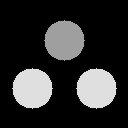}}
    \put(6.6,0){\includegraphics[width=0.1\textwidth]{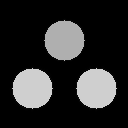}}
    \put(8.8, 0){\includegraphics[width=0.1\textwidth]{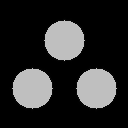}}
    \put(11,0){\includegraphics[width=0.1\textwidth]{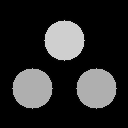}}
    \put(13.2,0){\includegraphics[width=0.1\textwidth]{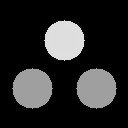}}
    \put(15.4,0){\includegraphics[width=0.1\textwidth]{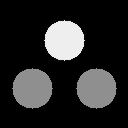}}
    \put(17.6,0){\includegraphics[width=0.1\textwidth]{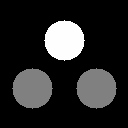}}
  \end{picture}
  \end{minipage}
}
\resizebox{1.0\linewidth}{!}{
  \begin{minipage}[h]{1.0\textwidth}
  \begin{picture}(20,2.4)
    \put(2.2,0){\includegraphics[width=0.1\textwidth]{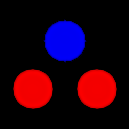}}
    \put(4.4,0){\includegraphics[width=0.1\textwidth]{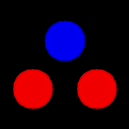}}
    \put(6.6,0){\includegraphics[width=0.1\textwidth]{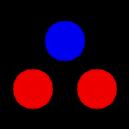}}
    \put(8.8, 0){\includegraphics[width=0.1\textwidth]{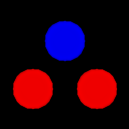}}
    \put(11,0){\includegraphics[width=0.1\textwidth]{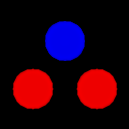}}
    \put(13.2,0){\includegraphics[width=0.1\textwidth]{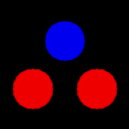}}
    \put(15.4,0){\includegraphics[width=0.1\textwidth]{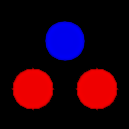}}
    \put(17.6,0){\includegraphics[width=0.1\textwidth]{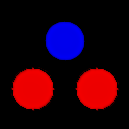}}
  \end{picture}
  \end{minipage}
}
\caption{Optimal transport geodesic between three scaled characteristic functions of balls with different densities. Here the source term parameter are $\penaltyPushforward = 10^{0}, 10^1, 10^2$. 
Bottom row: The distribution of the $L^1$-norm of the source term in time. }
\label{fig:3Circles}
\end{figure}

\begin{figure}
\hspace*{-5ex}
\resizebox{1.0\linewidth}{!}{
\begin{tikzpicture}
   \begin{axis}
     [ width=0.3\linewidth,
      at={(0.1\linewidth,0)},
      xmin=0.0, xmax=1.05,
      ymin=0, ymax=780,
      every axis/.append style={font=\footnotesize},
      axis lines=middle,
      axis line style={->},
       label style = {font=\footnotesize},
      xlabel={$t$},
      ylabel={\Umbruch{$\int_D |z(t, \cdot )| \d x$ \color{blue}{$\int_D z^{+}(t, \cdot ) \d x$} \color{red}{$\int_D z^{-}(t, \cdot ) \d x$ } } },
      y label style={at={(-1.0,1.0)}},
      yticklabels={, , , , , , },
      xtick={0.25,0.5,0.75,1},
     ]
     \addplot [thick] table[x expr=\thisrowno{0} / 128, y expr=\thisrowno{1} ] {images/OptimalTransportGeodesic_3Circles/Huber_eps1-4_del1/L1NormSourceTerm.txt};
     \addplot [thick, color=blue] table[x expr=\thisrowno{0} / 128, y expr=\thisrowno{1} ] {images/OptimalTransportGeodesic_3Circles/Huber_eps1-4_del1/L1NormSourceTermPos.txt};
     \addplot [thick, color=red] table[x expr=\thisrowno{0} / 128, y expr=\thisrowno{1} ] {images/OptimalTransportGeodesic_3Circles/Huber_eps1-4_del1/L1NormSourceTermNeg.txt};
    \end{axis}
\end{tikzpicture}
\begin{tikzpicture}
   \begin{axis}
     [ width=0.3\linewidth,
      at={(0.4\linewidth,0)},
        xmin=0.0, xmax=1.05,
        ymin=0, ymax=175,
        yticklabels={, , , , , , },
        axis line style={->},
        axis lines=middle,
      label style = {font=\footnotesize},
      xlabel={$t$},
      xtick={0.25,0.5,0.75,1},
       every axis/.append style={font=\footnotesize},
     ]
     \addplot [thick] table[x expr=\thisrowno{0} / 128, y expr=\thisrowno{1} ] {images/OptimalTransportGeodesic_3Circles/Huber_eps1-4_del10/L1NormSourceTerm.txt};
     \addplot [thick, color=blue] table[x expr=\thisrowno{0} / 128, y expr=\thisrowno{1}  ] {images/OptimalTransportGeodesic_3Circles/Huber_eps1-4_del10/L1NormSourceTermPos.txt};
     \addplot [thick, color=red] table[x expr=\thisrowno{0} / 128, y expr=\thisrowno{1}  ] {images/OptimalTransportGeodesic_3Circles/Huber_eps1-4_del10/L1NormSourceTermNeg.txt};
    \end{axis}
\end{tikzpicture}
\begin{tikzpicture}
   \begin{axis}
     [ width=0.3\linewidth,
      at={(0.7\linewidth,0)},
        xmin=0.0, xmax=1.05,
        ymin=0, ymax=27,
        yticklabels={, , , , , , },
        axis line style={->},
        axis lines=middle,
      label style = {font=\footnotesize},
      xlabel={$t$},
      xtick={0.25,0.5,0.75,1},
       every axis/.append style={font=\footnotesize},
     ]
     \addplot [thick] table[x expr=\thisrowno{0} / 128, y expr=\thisrowno{1}  ] {images/OptimalTransportGeodesic_3Circles/Huber_eps1-4_del100/L1NormSourceTerm.txt};
     \addplot [thick, color=blue] table[x expr=\thisrowno{0} / 128, y expr=\thisrowno{1}  ] {images/OptimalTransportGeodesic_3Circles/Huber_eps1-4_del100/L1NormSourceTermPos.txt};
     \addplot [thick, color=red] table[x expr=\thisrowno{0} / 128, y expr=\thisrowno{1}  ] {images/OptimalTransportGeodesic_3Circles/Huber_eps1-4_del100/L1NormSourceTermNeg.txt};
    \end{axis}
\end{tikzpicture}
}
\caption{Distribution of the $L^1$-norm of the source term in time for the example in Figure~\ref{fig:3Circles}. }
\label{fig:3CirclesSource}
\end{figure}
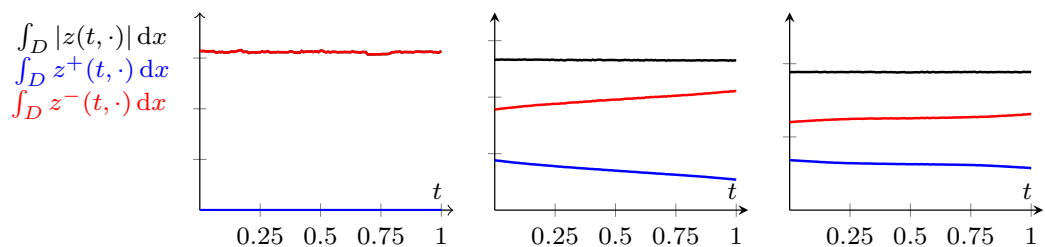

Finally in Figure~\ref{fig:WoodTextures} depicts an example for textured images. A generalized Wasserstein geodesic in case of the $L^2(H)$ cost functional is shown.

\begin{figure}[!h]
\setlength{\unitlength}{0.05\textwidth}
\centering
\resizebox{1.0\linewidth}{!}{
  \begin{minipage}[h]{1.0\textwidth}
  \begin{picture}(20,2.4)
    \put(0,0){\includegraphics[width=0.1\textwidth]{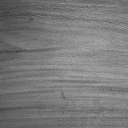}}
    \put(2.2,0){\includegraphics[width=0.1\textwidth]{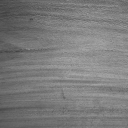}}
    \put(4.4,0){\includegraphics[width=0.1\textwidth]{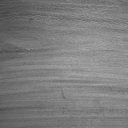}}
    \put(6.6,0){\includegraphics[width=0.1\textwidth]{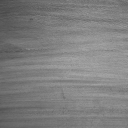}}
    \put(8.8, 0){\includegraphics[width=0.1\textwidth]{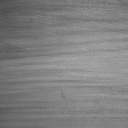}}
    \put(11,0){\includegraphics[width=0.1\textwidth]{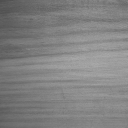}}
    \put(13.2,0){\includegraphics[width=0.1\textwidth]{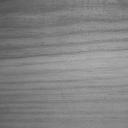}}
    \put(15.4,0){\includegraphics[width=0.1\textwidth]{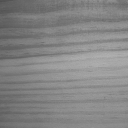}}
    \put(17.6,0){\includegraphics[width=0.1\textwidth]{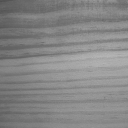}}
  \end{picture}
  \end{minipage}
}
\resizebox{1.0\linewidth}{!}{
  \begin{minipage}[h]{1.0\textwidth}
  \begin{picture}(20,2.4)
    \put(0,0){\includegraphics[width=0.1\textwidth]{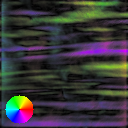}}
    \put(2.2,0){\includegraphics[width=0.1\textwidth]{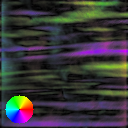}}
    \put(4.4,0){\includegraphics[width=0.1\textwidth]{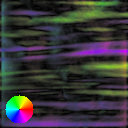}}
    \put(6.6,0){\includegraphics[width=0.1\textwidth]{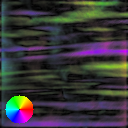}}
    \put(8.8, 0){\includegraphics[width=0.1\textwidth]{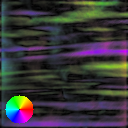}}
    \put(11,0){\includegraphics[width=0.1\textwidth]{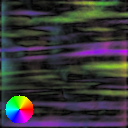}}
    \put(13.2,0){\includegraphics[width=0.1\textwidth]{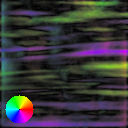}}
    \put(15.4,0){\includegraphics[width=0.1\textwidth]{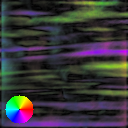}}
    \put(17.6,0){\includegraphics[width=0.1\textwidth]{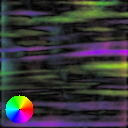}}
  \end{picture}
  \end{minipage}
}
\resizebox{1.0\linewidth}{!}{
  \begin{minipage}[h]{1.0\textwidth}
  \begin{picture}(20,2.4)
    \put(2.2,0){\includegraphics[width=0.1\textwidth]{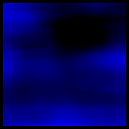}}
    \put(4.4,0){\includegraphics[width=0.1\textwidth]{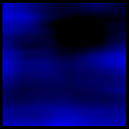}}
    \put(6.6,0){\includegraphics[width=0.1\textwidth]{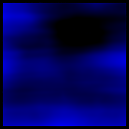}}
    \put(8.8, 0){\includegraphics[width=0.1\textwidth]{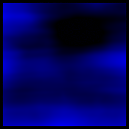}}
    \put(11,0){\includegraphics[width=0.1\textwidth]{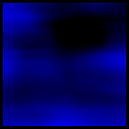}}
    \put(13.2,0){\includegraphics[width=0.1\textwidth]{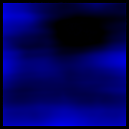}}
    \put(15.4,0){\includegraphics[width=0.1\textwidth]{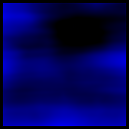}}
    \put(17.6,0){\includegraphics[width=0.1\textwidth]{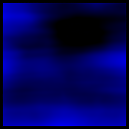}}
  \end{picture}
  \end{minipage}
}
\caption{Optimal transport geodesic between wood textures (top). Here the source term parameter is $\penaltyPushforward = 10^{0}$. Further the corresponding momentum (middle) and source term (bottom) are depited.}
\label{fig:WoodTextures}
\end{figure}

\paragraph*{Acknowledgements.} 
We acknowledge support by the German Science Foundation via the CRC 1060.

\end{document}